\theoremstyle{plain}
\newtheorem{thm}{Theorem}[section]
\newtheorem{prp}[thm]{Proposition}
\newtheorem{cor}[thm]{Corollary}
\newtheorem{lem}[thm]{Lemma}
\newtheorem{defn}[thm]{Definition}
\theoremstyle{remark}
\newtheorem{prob}[thm]{Problem}
\numberwithin{equation}{section}
\newcommand{\epse}{\varepsilon}
\newcommand{\phe}{\varphi}
\newcommand{\To}{\longrightarrow}
\begin{document}

\title
{Extension operators on balls and on spaces of finite sets}

\author{Antonio Avil\'{e}s, Witold Marciszewski}
\address{Universidad de Murcia, Departamento de Matem\'{a}ticas, Campus de Espinardo 30100 Murcia, Spain.} \email{avileslo@um.es}
\address{Institute of Mathematics\\
University of Warsaw\\ Banacha 2\newline 02--097 Warszawa\\
Poland}
\email{wmarcisz@mimuw.edu.pl}

\date{\today}
\subjclass[2010]{Primary 46B26, 46E15, 54C35, 54H05}
\keywords{$C(K)$, extension operator}

\thanks{The first author was supported by MINECO and FEDER (Project MTM2011-25377). Research of the second author was partially supported by the National Science Center research grant DEC-2012/07/B/ST1/03363}

\begin{abstract} We study extension operators between spaces $\sigma_n(2^X)$ of subsets of $X$ of cardinality at most $n$. As an application, we show that if $B_H$ is the unit ball of a nonseparable Hilbert
space $H$, equipped
with the weak topology, then, for any $0<\lambda<\mu$, there is no
extension operator $T: C(\lambda B_H)\to C(\mu B_H)$.
\end{abstract}

\maketitle

\section{Introduction}\label{intro}
Given a compact space $K$ (that we assume to be Hausdorff), by $C(K)$ we denote the Banach space of
continuous real-valued functions on $K$, equipped with the standard
supremum norm. When $L$ is a closed subspace of $K$, Tietze's extension theorem asserts that every $f\in C(L)$ can be extended to a continuous function $\hat{f}\in C(K)$ defined on all $K$. It is however a delicate problem whether the assignment $f\mapsto \hat{f}$ can be done in a linear and continuous way.

\begin{defn}
A bounded linear operator $T: C(L)\to C(K)$ is called an extension operator if, for
every $f\in C(L)$, $Tf$ is an extension of $f$.
\end{defn}

When $L$ is metrizable, the Borsuk-Dugundji extension theorem~\cite[II.4.14]{LT} provides such extension operator $T:C(L)\To C(K)$ which is moreover \emph{regular}: $T$ is positive, preserves constant functions and $\|T\|=1$. We are interested in the possible norms for extension operators, so following Corson and Lindenstrauss \cite{CL}, for $L\subset K$, we consider:
$$\eta(L,K) = \inf \left\{\|T\| : T:C(L)\To C(K) \text{ is an extension operator}\right\}.$$

When $L$ is nonmetrizable it might be the case that there is no extension operator at all, and in this case we agree that $\eta(L,K) = \infty$. We focus on two particular examples of compact spaces.

For a set $X$ and a natural number $n$, we have a compact space
$$\sigma_n(2^X) = \{\chi_A \in \{0,1\}^X : |A|\leq n\}$$
where $\chi_A$ denotes the characteristic function of the set $A$, $\chi_A(x) = 1$ if $x\in A$, and $\chi_A(x) = 0$ if $x\not\in A$. It is well known that any space $\sigma_n(2^X)$ is scattered and is an Eberlein compact spaces, i.e., is homeomorphic to a weakly compact subset of a Banach space. In particular, any Radon measure on $\sigma_n(2^X)$ is purely atomic, and $\sigma_n(2^X)$ is a Fr\'echet topological space.

We shall study extension operators for the inclusion $\sigma_m(2^X) \subset \sigma_n(2^X)$ when $n<m$. Such extension operators always exist (cf. \cite[Prop.\ 3.1]{Ma}), but the optimal norm depends on the cardinality on $X$. When $X$ is countable, by the aforementioned Borsuk-Dugundji theorem, we can get extension operators of norm one. When $|X|\geq \aleph_n$ we shall see in Section~\ref{sectionbigX} that any extension operator has to be somewhere close to a canonical form, and from this we get that
$$\eta(\sigma_m(2^X),\sigma_n(2^X))  = \sum_{k=0}^m \binom{n}{k}\binom{n-k-1}{m-k}$$
On the other hand, if $|X|=\aleph_1$, we can use the special structure of $\omega_1$ to improve the norm of extension operators and we get
$$\eta(\sigma_m(2^{\aleph_1}),\sigma_n(2^{\aleph_1}))  = 2n - 2m +1$$
The result for large $X$ relies on a combinatorial lemma on the existence of \emph{free sets} for set-valued maps \cite{ER,BM}, while the result for $\aleph_1$ requires a weakening of such lemma which is valid on any uncountable set that will be proved in Section~\ref{sectionpre}.

 The other example that we consider is that of balls in a Hilbert space, endowed with their weak topology. Corson and Lindenstrauss \cite[Proposition 1]{CL} showed that, for
the unit ball $B_H$ of a nonseparable Hilbert space $H$ and any
$0<\lambda<\mu$, there exists no weak-continuous retraction $r:
\mu B_H\to \lambda B_H$, i.e., a map $r$ such that $r(x)=x$ for
every $x\in \lambda B_H$. We will show 
the following stronger result:

\begin{thm}\label{balls} Let $H$ be a nonseparable Hilbert space  and $B_H$ be the unit ball of
$H$ equipped with the weak topology. Then, for any
$0<\lambda<\mu$, there is no extension operator $T: C(\lambda
B_H)\to C(\mu B_H)$.
\end{thm}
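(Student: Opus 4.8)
The plan is to derive the nonexistence from the lower bounds for $\eta(\sigma_n(2^X),\sigma_m(2^X))$ obtained earlier, by planting copies of these set-spaces inside the two balls. First I would normalize the Hilbert space. Writing $H\supseteq H_0:=\overline{\lin}\{e_\gamma:\gamma\in\Gamma\}\cong\ell_2(\Gamma)$ for an orthonormal family indexed by an uncountable $\Gamma$, the orthogonal projection $P\colon H\to H_0$ is weak-to-weak continuous and satisfies $\|Px\|\le\|x\|$, so $x\mapsto f\circ P$ is a norm-one extension operator $C(\lambda B_{H_0})\to C(\lambda B_H)$, and restriction to $\mu B_{H_0}$ is norm one. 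Sandwiching a hypothetical $T$ between these shows that it suffices to rule out extension operators $C(\lambda B_{H_0})\to C(\mu B_{H_0})$; hence I may assume $H=\ell_2(\Gamma)$ with $\Gamma$ uncountable. The key geometric observation is that on norm-bounded sets the weak topology coincides with the topology of coordinatewise convergence (test vectors span a dense subspace), so for the scale $c=\mu/\sqrt m$ the assignment $\chi_B\mapsto c\,\chi_B=\sum_{\gamma\in B}c\,e_\gamma$ is a homeomorphism of $\sigma_m(2^\Gamma)$ onto a weakly compact set $Z\subseteq\mu B_H$. Setting $n=\lfloor m\lambda^2/\mu^2\rfloor$, the point $c\,\chi_B$ lies in $\lambda B_H$ exactly when $|B|\le n$, so the trace $Y:=Z\cap\lambda B_H$ is precisely a copy of $\sigma_n(2^\Gamma)$, and $Y\subseteq Z$ corresponds to the inclusion $\sigma_n(2^\Gamma)\subseteq\sigma_m(2^\Gamma)$.

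Given an extension operator $T\colon C(\lambda B_H)\to C(\mu B_H)$, I would transfer it to the set-spaces by the composition $S=\rho_Z\circ T\circ E$, where $\rho_Z\colon C(\mu B_H)\to C(Z)$ is restriction and $E\colon C(Y)\to C(\lambda B_H)$ is an extension operator. For $y\in Y\subseteq\lambda B_H$ one has $(TEg)(y)=(Eg)(y)=g(y)$, so $S$ is an extension operator for $\sigma_n(2^\Gamma)\subseteq\sigma_m(2^\Gamma)$, and $\|S\|\le\|E\|\,\|T\|$. Since by the results of Section~\ref{sectionpre} (and their consequences for set-spaces, e.g.\ $\eta(\sigma_n(2^{\aleph_1}),\sigma_m(2^{\aleph_1}))=2(m-n)+1$) we have $\eta(\sigma_n(2^\Gamma),\sigma_m(2^\Gamma))\to\infty$ as $m\to\infty$ with $n\approx m\lambda^2/\mu^2$, letting $m\to\infty$ forces $\|T\|=\infty$, a contradiction — provided the norms $\|E\|$ can be kept bounded independently of $n$.

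The main obstacle is exactly this last proviso: producing extension operators $E\colon C(Y)\to C(\lambda B_H)$ whose norms do not grow with $n$. There is no weakly continuous retraction $\lambda B_H\to Y$ at all, since $\lambda B_H$ is convex, hence connected, while $Y$ is totally disconnected; thus $E$ must be genuinely measure-valued and cannot be read off from a point map. I would build the representing measures from the weakly continuous coordinate functionals $x\mapsto\langle x,e_\gamma\rangle$ together with the scattered structure of $C(\sigma_n(2^\Gamma))$: each $h\in C(Y)$ has a finite-support Möbius expansion $h(c\,\chi_B)=\sum_{S\subseteq B}a_S$, and one extends it by replacing the indicators of coordinates by bounded weakly continuous ``soft membership'' functions $v_\gamma(x)$ with $v_\gamma(c\,\chi_B)=\chi_B(\gamma)$, setting $\widehat h(x)=\sum_{|S|\le n}a_S\prod_{\gamma\in S}v_\gamma(x)$. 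Weak continuity of $\widehat h$ follows from the uniform tail bound $\sum_\gamma v_\gamma\le\lambda^2/c^2$ combined with the $c_0$-type decay of the coefficients $a_S$ forced by continuity of $h$; the delicate point, and the crux of the whole argument, is the uniform estimate $\|E\|\le\mathrm{const}$, which I expect to be the hardest step and the place where the weakening of the free-set lemma proved in Section~\ref{sectionpre} is really needed.

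As a safeguard against the possibility that $\|E\|$ cannot be controlled directly, I would keep in reserve a more intrinsic argument that works on the balls themselves. Representing $T$ by a weak$^\ast$-continuous family of measures $x\mapsto\mu_x\in M(\lambda B_H)$ with $\mu_x=\delta_x$ on $\lambda B_H$ and $\|\mu_x\|\le\|T\|$, the barycenter $b(x)=\int y\,d\mu_x(y)$ defines a weakly continuous map $\mu B_H\to H$ fixing $\lambda B_H$ pointwise, a quantitative relative of the Corson--Lindenstrauss retraction. Applying the uncountable free-set lemma to the supports of the measures $\mu_{c\,\chi_B}$, indexed by $B\subseteq\Gamma$, should again yield the unbounded growth of $\|T\|$, thereby upgrading the Corson--Lindenstrauss nonexistence of retractions to the nonexistence of extension operators.
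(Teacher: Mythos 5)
Your overall architecture is exactly the paper's: reduce to $H=\ell_2(\Gamma)$, plant a copy of $\sigma_m(2^\Gamma)$ in $\mu B_H$ whose trace on $\lambda B_H$ is a copy of $\sigma_n(2^\Gamma)$ with $n\approx m\lambda^2/\mu^2$, sandwich $T$ between a bounded extension operator $E:C(Y)\to C(\lambda B_H)$ and the restriction to the planted copy, and let $m\to\infty$ against the lower bound $\eta(\sigma_n(2^{\aleph_1}),\sigma_m(2^{\aleph_1}))=2(m-n)+1$. (The paper performs the planting after a coordinatewise signed-squaring homeomorphism that turns the Hilbert ball into an $\ell_1$-type ball with the pointwise topology, and takes the pair $(m,m+k)$ with $k>\|T\|$ rather than $(n,m)$, but this is cosmetic.) The gap is the one you flag yourself: the whole contradiction collapses unless $\|E\|$ is bounded independently of $n$, and you do not prove this. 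Moreover, your proposed route to it is off target on two counts. First, the M\"obius coefficients satisfy only $|a_S|\le 2^{|S|}\|h\|$, so estimating $\widehat h(x)=\sum_{|S|\le n}a_S\prod_{\gamma\in S}v_\gamma(x)$ term by term gives a norm bound growing exponentially in $n$, which is useless here. Second, the free-set lemmas of Section~\ref{sectionpre} play no role in this step; they enter only in the lower bound for $\eta$ on the set-spaces.

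What actually closes the gap (Lemma~\ref{reg} in the paper) is to rewrite the very operator you propose, not in the M\"obius basis but as a partition of unity: with $F_x=\{\gamma : v_\gamma(x)>0\}$ (finite, of size at most $n$ for $x$ in the small ball, by a suitable choice of the threshold defining $v_\gamma$), one has
$$\widehat h(x)=\sum_{A\subseteq F_x}\Bigl(\prod_{\gamma\in A}v_\gamma(x)\prod_{\gamma\in F_x\setminus A}\bigl(1-v_\gamma(x)\bigr)\Bigr)\,h(c\,\chi_A),$$
which coincides with your formula after expanding the products (the terms with $S\not\subseteq F_x$ vanish), but now the weights are nonnegative and sum to $1$, so $\|E\|=1$ for every $n$. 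With that single identity supplied, your main argument goes through; without it, the proof is incomplete at its decisive point. Your fallback barycenter argument does not rescue it: for signed measures of total variation up to $\|T\|$ the barycenter need not land in $\lambda B_H$, so it does not reproduce the Corson--Lindenstrauss retraction, and no concrete contradiction is extracted from it.
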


This result is obtained as an application of the computation of  the number $\eta(\sigma_m(2^{\aleph_1}),\sigma_n(2^{\aleph_1}))$ mentioned above.

Our investigations are somehow related to the following known general problem:

\begin{prob}
For which compact spaces $K$ there exists a zerodimensional compact space $L$ such that the spaces $C(K)$ and $C(L)$ are isomorphic?
\end{prob}

P.\ Koszmider \cite{Ko} constructed the first example of compact space $K$ without the above property, another examples were given by G.\ Plebanek \cite{Pl}, and recently by A.\ Aviles and P.\ Koszmider  \cite{AK}. On the other hand, by Milutin's Theorem all metrizable compacta $K$ poses this property. It is not known whether this holds true for the larger class of Eberlein compact spaces, some partial results in this direction contains the paper \cite{AA}. Even the following concrete question is still open

 \begin{prob}
 Let $B_H$ be the unit ball of a nonseparable Hilbert space  $H$ equipped with the weak topology. Does there exist a zerodimensional compact space $L$ such that the spaces $C(B_H)$ and $C(L)$ are isomorphic?
 \end{prob}
 
 Isomorphisms between the spaces $C(K)$ are often constructed using the Pe\l czy\'n\-ski decomposition method - a technique based on the factorizations of function spaces, cf.\ \cite{Se}. One method of obtaining such factorizations is to use the extension operators, which motivated our investigations of such operators for subsets of balls of Hilbert spaces.

\section{Preliminaries}\label{sectionpre}

For a compact space $K$, by $M(K)$ we denote the space of all Radon measures on $K$, which can be identified with the dual space $C(K)^*$. $B_{M(K)}$ stands for the unit ball of $M(K)$, we will always consider this ball equipped
with the $weak^*$ topology inherited from $C(K)^*$. For a point $x\in K$, $\delta_x$ denotes the Dirac measure on $K$ concentrated at $x$. When we have an extension operator $T:C(L)\To C(K)$, we have an associated continuous function $\phe_T:K \To \|T\| B_{M(L)}$ given by $\phe_T(x) = T^\ast(\delta_x)$ with the key property that $\phe_T(y) = \delta_y$ when $y\in L$. The function $\phe_T$ can be viewed as \emph{generalized retraction} that sends each point of $K$ to a measure on $L$ (instead of a point of $L$).

For a set $X$ and $n\in\omega$, we use the standard notations
\begin{eqnarray*}
\left[X\right]^{n\ \ } &=& \{A\subset X: |A|=n\}\\
\left[X\right]^{\leq n} &=& \{A\subset X: |A|\leq n\}\\
\left[X\right]^{<n} &=& \{A\subset X: |A|<n\}
\end{eqnarray*}

We will use the following combinatorial lemma, cf. \cite[Lemma
1.1]{ER}, \cite[Lemma 3.2]{BM}

\begin{lem}\label{omega_n} Let $n\in\omega$,
$X$ be a set of cardinality $\ge\aleph_n$, and $S:
[X]^n\to [X]^{<\omega}$ be an arbitrary map. Then there exists
$A\in[X]^{n+1}$ such that, for every $a \in A$, we have
$a\notin\phe(A\setminus \{a\})$.
\end{lem}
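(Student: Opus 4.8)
The plan is to prove the lemma by induction on $n$, reducing the arity of the set-map by one at each step with the help of an elementary submodel. (Here I read the displayed conclusion with $S$ in place of $\phe$, consistent with the hypothesis.)

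For the base case $n=0$, the domain $[X]^0$ consists of the single set $\emptyset$, so $S(\emptyset)$ is a fixed finite subset of $X$; since $|X|\ge\aleph_0$ I may pick $a\in X\setminus S(\emptyset)$ and take $A=\{a\}$, which is free because $A\setminus\{a\}=\emptyset$.

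For the inductive step, assume the statement for $n$ and let $|X|\ge\aleph_{n+1}$ and $S:[X]^{n+1}\to[X]^{<\omega}$. I would fix a large regular cardinal $\theta$ and an elementary submodel $M\prec H(\theta)$ with $X,S\in M$, $|M|=\aleph_n$, and $\aleph_n\subseteq M$ (the Skolem hull of $\aleph_n\cup\{X,S\}$). Since $|M|=\aleph_n<\aleph_{n+1}\le|X|$, there is a ``generic'' point $a\in X\setminus M$, and since $M$ contains a bijection $\aleph_{n+1}\to X$ one checks that $|M\cap X|\ge\aleph_n$. I then define the reduced map $g:[M\cap X]^n\to[X]^{<\omega}$ by $g(C)=S(C\cup\{a\})$, noting that $a\notin M\supseteq C$, so $C\cup\{a\}\in[X]^{n+1}$. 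Applying the inductive hypothesis to the set $M\cap X$ and the map $C\mapsto g(C)\cap(M\cap X)$ yields a set $B\in[M\cap X]^{n+1}$ that is free for $g$, that is, $b\notin S((B\setminus\{b\})\cup\{a\})$ for every $b\in B$. I would set $A=B\cup\{a\}$.

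The crux --- and the step I expect to be the main obstacle --- is verifying the one remaining freeness condition, namely $a\notin S(B)$. This does not follow from the inductive hypothesis and is exactly where elementarity is essential: because $B$ is a finite subset of $M$ we have $B\in M$, hence $S(B)\in M$; as $S(B)$ is finite it is a subset of $M$, and since $a\notin M$ we get $a\notin S(B)$. Together with the freeness of $B$ for $g$, this shows that every element of $A=B\cup\{a\}$ avoids the $S$-image of the complementary $(n+1)$-element set, so $A\in[X]^{n+2}$ is free and the induction is complete. It is worth remarking why the genericity is funneled into the single point $a$: a naive attempt to choose all $n+2$ points from one nested chain of submodels fails, since the freeness requirement is symmetric in the points while chain-membership is linearly ordered, so no single nested family can place each point outside a model containing all the others.
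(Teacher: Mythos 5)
Your proof is correct. Note that the paper does not actually prove Lemma~\ref{omega_n}: it is quoted from Enflo--Rosenthal and Bell--Marciszewski, so there is no in-text argument to compare against; what you have supplied is a self-contained proof of the cited combinatorial fact. Your induction on $n$ via an elementary submodel $M\prec H(\theta)$ of cardinality $\aleph_n$ with $\aleph_n\subseteq M$ and $X,S\in M$ is sound, and you correctly isolate and discharge the two delicate points: that $|M\cap X|\ge\aleph_n$ (so the inductive hypothesis applies to the trace map $C\mapsto S(C\cup\{a\})\cap(M\cap X)$ on $[M\cap X]^n$), and that the finite set $B\subseteq M$ is an \emph{element} of $M$, whence $S(B)\in M$ is a finite set, hence a subset of $M$, hence misses $a$. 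The classical proofs in the cited sources reach the same statement by a more hands-on combinatorial induction, enumerating $X$ and counting, at each stage, the finitely many points forbidden by the values of $S$ on tuples already chosen; the elementary-submodel argument buys cleaner bookkeeping of where the genericity lives (all of it concentrated in the single point $a\notin M$) at the cost of the model-theoretic machinery, but the engine of the induction --- trading one distinguished point for one drop in cardinality from $\aleph_{k}$ to $\aleph_{k-1}$ --- is the same in both. Your closing remark about why a single nested chain of submodels cannot produce all $n+1$ points at once is accurate and explains why the induction must be organized this way.
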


The following is an equivalent reformulation, in the way that we will actually use:

\begin{lem}\label{omega_p} Let $m\leq p<\omega$,
$X$ be a set with $|X|\geq \aleph_{p-1}$, and $S:
[X]^{\leq m}\to [X]^{<\omega}$ be a map such that $S(A)\cap A =\emptyset$ for every $A$. Then there exists
$Y\in[X]^{p}$ such that $S(A)\cap Y =\emptyset$ for all $A\in [Y]^{\leq m}$.
\end{lem}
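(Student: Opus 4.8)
The plan is to derive Lemma~\ref{omega_p} from Lemma~\ref{omega_n} by bundling all the small subsets into a single set-valued map defined on $(p-1)$-element sets. First I would set $n=p-1$, so that the hypothesis $|X|\geq\aleph_{p-1}$ becomes exactly the cardinality requirement $|X|\geq\aleph_n$ of Lemma~\ref{omega_n}. Then I would define a new map $S':[X]^{n}\to[X]^{<\omega}$ by $S'(B)=\bigcup\{S(A):A\subseteq B,\ |A|\leq m\}$. Since $B$ has only $n=p-1$ elements it has finitely many subsets, and each $S(A)$ is finite, so $S'(B)$ is indeed a finite set and $S'$ is a legitimate map into $[X]^{<\omega}$.

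Next I would apply Lemma~\ref{omega_n} to $S'$ to obtain a set $Y\in[X]^{n+1}=[X]^{p}$ such that $y\notin S'(Y\setminus\{y\})$ for every $y\in Y$. Unwinding the definition of $S'$, this says precisely that $y\notin S(A)$ whenever $y\in Y$ and $A\subseteq Y\setminus\{y\}$ with $|A|\leq m$.

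It remains to check that this $Y$ witnesses the conclusion, i.e.\ that $S(A)\cap Y=\emptyset$ for every $A\in[Y]^{\leq m}$. Fix such an $A$ and an arbitrary $y\in Y$; I claim $y\notin S(A)$. If $y\in A$, this is immediate from the standing hypothesis $S(A)\cap A=\emptyset$. If $y\notin A$, then $A\subseteq Y\setminus\{y\}$ and $|A|\leq m$, so the freeness property of $Y$ established in the previous step gives $y\notin S(A)$. Either way $y\notin S(A)$, and since $y\in Y$ was arbitrary we conclude $S(A)\cap Y=\emptyset$.

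There is no serious obstacle here; the reduction is essentially bookkeeping. The one point requiring care is that Lemma~\ref{omega_n} only controls single-point deletions from sets of a fixed size $n$, whereas the conclusion must handle all subsets $A$ of $Y$ of size up to $m$ simultaneously. Packaging every such $S(A)$ into the one map $S'$ on $(p-1)$-element sets is what lets a single application of Lemma~\ref{omega_n} do the work, while the disjointness hypothesis $S(A)\cap A=\emptyset$ is exactly what is needed to cover the otherwise-missing case in which the test point $y$ already belongs to $A$.
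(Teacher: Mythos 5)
Your proof is correct and follows exactly the paper's route: the paper likewise sets $n=p-1$ and applies Lemma~\ref{omega_n} to the auxiliary map $S'(B)=\bigcup_{A\in[B]^{\leq m}}S(A)$. You simply spell out the unwinding step and the role of the hypothesis $S(A)\cap A=\emptyset$, which the paper leaves implicit.
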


\begin{proof}
Apply Lemma~\ref{omega_n} to $n=p-1$, and the function $S':[X]^n\To [X]^{<\omega}$ given by $S'(A) = \bigcup_{B\in [A]^{\leq m}}S(B)$. 
\end{proof}

We shall also need a version of this lemma that holds for any uncountable set.

\begin{lem}\label{omega_1} 
Let $X$ be an uncountable set, $n<\omega$, and $S:[X]^{<\omega}\To [X]^{<\omega}$. Then there exists $Z = \{z_1,\ldots,z_n\}\in [X]^{n}$ such that $z_j \not\in S(\{z_i : i\in I\})$ whenever $j<\min(I)$ or $j>\max(I)$.
\end{lem}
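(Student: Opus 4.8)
The plan is to split the conclusion into two halves and secure them in two stages. Call the requirement \emph{forward} when $j>\max I$ and \emph{backward} when $j<\min I$; since $\{z_i:i\in I\}\subseteq\{z_i:i<j\}$ in the first case and $\{z_i:i\in I\}\subseteq\{z_i:i>j\}$ in the second, the two halves together are exactly the assertion of the lemma. The forward half is the easy one: if we list the points in increasing index order, the set $\{z_i:i\in I\}$ is chosen before $z_j$, so $S(\{z_i:i\in I\})$ is already a fixed finite set and $z_j$ only has to dodge it. The backward half is the genuine difficulty, because there $z_j$ is fixed first while $\{z_i:i\in I\}$ is assembled afterwards, so we cannot simply avoid a known finite set; this is where uncountability must enter.

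First I would build, by transfinite recursion, a long injective sequence $(y_\xi)_{\xi<\omega_1}$ in $X$ enjoying the forward property in the strong form $y_\xi\notin S(A)$ for every finite $A\subseteq\{y_\eta:\eta<\xi\}$. At stage $\xi<\omega_1$ the set $\{y_\eta:\eta<\xi\}$ is countable, hence has only countably many finite subsets, so $\{y_\eta:\eta<\xi\}\cup\bigcup\{S(A):A\in[\{y_\eta:\eta<\xi\}]^{<\omega}\}$ is countable; as $X$ is uncountable we may pick $y_\xi$ outside it. This is the only place uncountability is used, and it is essential: it lets the sequence run past the infinite stages while every initial segment stays countable. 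Note that the forward property is inherited by every subsequence indexed in increasing order.

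It remains to extract $z_1,\dots,z_n$ as such a subsequence, $z_i=y_{\xi_i}$ with $\xi_1<\dots<\xi_n$, also satisfying the backward property $z_j\notin S(\{z_i:i\in I\})$ whenever $\emptyset\ne I\subseteq\{j+1,\dots,n\}$. Here the key is that, the $y_\xi$ being distinct, for any finite $F\subseteq X$ only finitely many indices $\xi$ have $y_\xi\in F$. I would therefore choose the indices from the top down, $\xi_n,\xi_{n-1},\dots,\xi_1$: once $\xi_{j+1},\dots,\xi_n$ are fixed, the set $F_j=\bigcup\{S(\{y_{\xi_i}:i\in I\}):\emptyset\ne I\subseteq\{j+1,\dots,n\}\}$ is finite, so only finitely many indices are forbidden, and choosing each $\xi_j$ in the infinite block $[\omega\cdot j,\ \omega\cdot(j+1))$ (which lies entirely below $\xi_{j+1}$) leaves ample room to avoid them. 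The resulting $z_1,\dots,z_n$ then has both the forward property (inherited from the sequence) and the backward property (arranged in the selection), which is precisely the conclusion. The main obstacle, as indicated, is the backward half; the whole strategy is designed so that building the forward sequence first makes the forward half automatic and reduces the backward half to a finite avoidance that the room in an $\omega_1$-length sequence easily accommodates.
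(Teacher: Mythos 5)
Your proof is correct and follows essentially the same two-stage strategy as the paper: the blocks $[\omega\cdot j,\omega\cdot(j+1))$ of your transfinite sequence play exactly the role of the paper's countable sets $Y_j$ (each avoiding $S$-images of finite subsets of the earlier pools, which gives the forward half), and your top-down selection of $\xi_n,\ldots,\xi_1$ is the paper's reverse induction giving the backward half. The only cosmetic difference is that you build an $\omega_1$-length sequence where $\omega\cdot n$ terms suffice.
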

\begin{proof}
We start the construction with a countable infinite set $Y_1\subset X$. Then inductively, we choose a countable infinite set $Y_j\subset X$ for $j=2,\ldots,n$ such that $$ (\star)\ \ Y_j\cap \bigcup\left\{ S(A) : A\in \left[\bigcup_{i<j}Y_i\right]^{<\omega}\right\} = \emptyset.$$ 
Then, by reverse induction (starting for $j=n$ and then $n-1,n-2,\ldots$) we choose $z_j\in Y_j$ such that
$$(\star\star)\ \ z_j \not\in \bigcup\left\{S(A) : A\in \left[\{z_{j+1},\ldots,z_n\}\right]^{<\omega}\right\}.$$
Then, $(\star)$ guarantees the statement of the lemma when $j>\max(I)$, and $(\star\star)$ when $j<\min(I)$.

\end{proof}

\section{A canonical extension operator between spaces $\sigma_m(2^X)$}\label{sectionbigX}

\begin{thm}\label{naturalextension}
Consider $m<n<\omega$ and a set $X$. Then, we have an extension operator $T_X:C(\sigma_m(2^X))\To C(\sigma_n(2^X))$ given by
$$ T_X(f)(\chi_A) = \sum_{B\in [A]^{\leq m}} (-1)^{m-|B|}\binom{|A| - |B|-1}{m-|B|} f(\chi_B)$$
if  $|A|>m$, and by $T_X(f)(\chi_A) = f(\chi_A)$  if $|A|\leq m$.\\
\end{thm}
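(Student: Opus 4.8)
The plan is to verify directly that the proposed formula defines an extension operator, which breaks into three checks: that $T_X(f)$ is well-defined and continuous on $\sigma_n(2^X)$, that $T_X$ is a bounded linear operator, and that it genuinely extends $f$, i.e. $T_X(f)(\chi_A) = f(\chi_A)$ whenever $|A| \le m$. Linearity is immediate from the formula since the coefficients do not depend on $f$, and the extension property holds by fiat from the second clause of the definition. So the real content is continuity together with (implicitly) boundedness.

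\medskip

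First I would address continuity. Since $\sigma_n(2^X)$ is scattered and each point $\chi_A$ with $|A| = n$ is isolated, the only continuity to check is at points $\chi_A$ with $|A| < n$, where a net $\chi_{A_\alpha} \to \chi_A$ means that eventually $A \subseteq A_\alpha$ and the ``extra'' coordinates $A_\alpha \setminus A$ wander off to coordinates that are eventually $0$ in the limit. Concretely, I would take a basic neighborhood of $\chi_A$ determined by finitely many coordinates and show that for $\chi_{A'}$ in this neighborhood (so $A \subseteq A'$ and $A' \setminus A$ avoids a prescribed finite set), the value $T_X(f)(\chi_{A'})$ is close to $T_X(f)(\chi_A)$. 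The key algebraic fact I expect to need is a binomial identity: when we pass from $A$ to $A' = A \cup F$ with $F$ a finite set of fresh coordinates disjoint from $A$, the sum over $B \in [A']^{\le m}$ should collapse, via the coefficients $(-1)^{m-|B|}\binom{|A'|-|B|-1}{m-|B|}$, onto the sum over $B \in [A]^{\le m}$, because any $B$ meeting $F$ contributes a value $f(\chi_B)$ that varies continuously as the elements of $F$ move, while the total coefficient weight attached to the ``$F$-part'' telescopes to something controlled. The cleanest route is to group the terms by $B \cap A$ and use the Vandermonde-type identity $\sum_{k} (-1)^{m-k}\binom{r}{k}\binom{s-k-1}{m-k}$ to show the contributions from $B$'s meeting $F$ cancel or reduce correctly.

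\medskip

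For boundedness I would read off the norm directly: for fixed $A$ with $|A| = \ell > m$, the functional $f \mapsto T_X(f)(\chi_A)$ has norm $\sum_{B \in [A]^{\le m}} \left|\binom{\ell-|B|-1}{m-|B|}\right| = \sum_{k=0}^m \binom{\ell}{k}\binom{\ell-k-1}{m-k}$, and one checks this is maximized at $\ell = n$, giving the value $\sum_{k=0}^m \binom{n}{k}\binom{n-k-1}{m-k}$ that appears in the introduction's formula for $\eta$. This requires showing the summed absolute value of the coefficients is increasing in $\ell$ over the relevant range, a finite monotonicity computation.

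\medskip

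The main obstacle I anticipate is the continuity verification, specifically pinning down the exact binomial cancellation that guarantees $T_X(f)$ is continuous at the non-isolated points: one must show that the coefficients were chosen \emph{precisely} so that moving the fresh coordinates in $F$ produces no jump in the limit. I would first establish the core identity as a standalone combinatorial lemma (an alternating binomial sum evaluating to a Kronecker-delta-like expression), and then feed it into the neighborhood estimate. Once that identity is in hand, continuity and the extension property combine to prove $T_X$ is the claimed extension operator, and the norm computation is then routine.
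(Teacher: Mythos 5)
Your plan matches the paper's proof essentially step for step: the paper likewise reduces everything to continuity (using that $\sigma_n(2^X)$ is Fr\'echet, so it checks sequential continuity rather than basic neighborhoods), groups the terms of $T_X(f)(\chi_{A'})$ by the trace $D=B\cap A$ on the limit set, and evaluates the resulting alternating sums $\sum_i(-1)^{m-|D|-i}\binom{|F|}{i}\binom{|A'|-|D|-i-1}{m-|D|-i}$, showing they equal $1$, $0$, or $(-1)^{m-|D|}\binom{|A|-|D|-1}{m-|D|}$ according to whether $D=A$ with $|A|\le m$, $D\subsetneq A$ with $|A|\le m$, or $|A|>m$. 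The one piece you defer -- the inductive verification of these Kronecker-delta-type identities (the paper's lemmas on $\Theta$, $\Phi$, $\Psi$) -- is where the bulk of the paper's work lies, but you have correctly identified the identity that is needed.
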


The proof of Theorem~\ref{naturalextension} will require a number of technical lemmas showing some combinatorial identities. For any natural numbers $p,q,r,s,t$ such that $p\le\min(q,r,s,t), t\le s$ we define

\begin{eqnarray*} \label{Theta}
\Theta(p,q,r,s,t) = \sum_{i=0}^p (-1)^{q-i}\binom{r}{i}\binom{s-i}{t-i}.
\end{eqnarray*}

Observe that
\begin{eqnarray}
\label{Theta0} \Theta(p,q+1,r,s,t) = -\Theta(p,q,r,s,t)
\end{eqnarray}

\begin{lem}\label{Theta}
For any natural numbers $p,q,r,s,t$ such that $p\le\min(q,r,s,t), t\le s$ we have the following identities
\begin{eqnarray}
\label{Theta1} \Theta(p,q,r+1,s+1,p) &=& \Theta(p,q,r,s,p),\\
\label{Theta2} \Theta(p+1,q+1,p+1,s+1,t) &=& -\Theta(p,q,p,s,t) \quad \mbox{for } p\le t-1.
\end{eqnarray}
\end{lem}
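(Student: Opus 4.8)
The plan is to prove both identities by the same elementary mechanism: apply Pascal's rule $\binom{a+1}{b}=\binom{a}{b}+\binom{a}{b-1}$ to the binomial coefficients carrying the incremented parameters on the left-hand side, then reindex the resulting shifted sums and observe that the surplus terms cancel in pairs. Throughout I use the convention that $\binom{a}{b}=0$ whenever $b<0$ or $b>a$, which makes all boundary terms vanish automatically and keeps the reindexing clean. The hypothesis $p\le t-1$ in \eqref{Theta2} is exactly what guarantees that the left-hand side $\Theta(p+1,q+1,p+1,s+1,t)$ meets the defining constraint $p+1\le\min(\dots,t)$, so that both sides are legitimate instances of $\Theta$; the combinatorial cancellation itself is purely formal.

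For \eqref{Theta1} I would start from $\Theta(p,q,r+1,s+1,p)=\sum_{i=0}^p(-1)^{q-i}\binom{r+1}{i}\binom{s+1-i}{p-i}$ and split $\binom{r+1}{i}=\binom{r}{i}+\binom{r}{i-1}$ into two sums $S_1$ and $S_2$. In $S_1$ I then expand $\binom{s+1-i}{p-i}=\binom{s-i}{p-i}+\binom{s-i}{p-1-i}$; the first piece is exactly $\Theta(p,q,r,s,p)$, while the second piece is a sum whose $i=p$ term vanishes, so it equals $\Theta(p-1,q,r,s,p-1)$, which I call $\alpha$. In $S_2$ I substitute $j=i-1$; the $j=-1$ term drops out because $\binom{r}{-1}=0$, the surviving summand $\binom{s-j}{p-1-j}$ matches the summand of $\alpha$ term for term, and the sign picks up an extra factor $-1$, whence $S_2=-\alpha$. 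Adding, $S_1+S_2=\Theta(p,q,r,s,p)$, as required.

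For \eqref{Theta2} the same recipe applies. Starting from $\Theta(p+1,q+1,p+1,s+1,t)=\sum_{i=0}^{p+1}(-1)^{q+1-i}\binom{p+1}{i}\binom{s+1-i}{t-i}$, I split $\binom{p+1}{i}=\binom{p}{i}+\binom{p}{i-1}$ into $T_1$ and $T_2$. In $T_1$ the term $i=p+1$ vanishes, and expanding the inner binomial $\binom{s+1-i}{t-i}$ produces $-\Theta(p,q,p,s,t)$ (the sign flip coming from $(-1)^{q+1-i}=-(-1)^{q-i}$) together with a leftover sum $\alpha'$. Substituting $j=i-1$ in $T_2$ then gives precisely $-\alpha'$, so once more everything but the main term cancels and the right-hand side $-\Theta(p,q,p,s,t)$ remains.

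The only genuine bookkeeping difficulty, and the step I would watch most carefully, is the reindexing in $S_2$ (respectively $T_2$): one must verify that after the shift $j=i-1$ the inner binomial $\binom{s+1-i}{p-i}$ (respectively $\binom{s+1-i}{t-i}$) becomes term-for-term identical to the leftover sum $\alpha$ (respectively $\alpha'$) produced by $S_1$ (respectively $T_1$), rather than to a sum in which $s$ has been shifted to $s-1$. This is exactly why it is essential to expand the inner binomial in $S_1$ but to leave it untouched in $S_2$ until after the reindexing is performed: expanding both binomials before reindexing would scatter the surplus into several sums with mismatched parameters and obscure the cancellation.
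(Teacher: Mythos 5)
Your proof is correct and follows essentially the same route as the paper's: Pascal's rule applied to the binomial carrying the incremented parameter, an index shift $j=i-1$ in the shifted piece, and Pascal's rule again on the inner binomial so that the surplus sums cancel. The only difference is bookkeeping --- the paper peels off the boundary terms $i=0$ and $i=p$ (resp.\ $i=p+1$) explicitly and recombines via Pascal in reverse, whereas you let the convention $\binom{a}{b}=0$ for $b<0$ or $b>a$ absorb the boundary cases.
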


\begin{proof}
We start with the first identity

\begin{eqnarray*}
\Theta(p,q,r+1,s+1,p) = \sum_{i=0}^p (-1)^{q-i}\binom{r+1}{i}\binom{s+1-i}{p-i} \\ = (-1)^{q}\binom{r+1}{0}\binom{s+1}{p}  
+ \sum_{i=1}^p (-1)^{q-i}\left(\binom{r}{i-1} + \binom{r}{i}\right)\binom{s+1-i}{p-i}\\ = (-1)^{q}\binom{s+1}{p} + (-1)^{q-1}\binom{r}{0}\binom{s+1-1}{p-1}\\
+  \sum_{i=1}^{p-1} (-1)^{q-i}\binom{r}{i}\left(\binom{s+1-i}{p-i} - \binom{s+1-i-1}{p-i-1}\right)\\ + (-1)^{q-p}\binom{r}{p}\binom{s+1-p}{p-p}\\
= (-1)^{q}\left(\binom{s+1}{p} - \binom{s}{p-1}\right) + \sum_{i=1}^{p-1} (-1)^{q-i}\binom{r}{i}\binom{s-i}{p-i}\\
+(-1)^{q-p}\binom{r}{p}\binom{s-p}{p-p}\\
= (-1)^{q}\binom{r}{0}\binom{s}{p} + \sum_{i=1}^{p} (-1)^{q-i}\binom{r}{i}\binom{s-i}{p-i}\\
=  \sum_{i=0}^{p} (-1)^{q-i}\binom{r}{i}\binom{s-i}{p-i} = \Theta(p,q,r,s,p)
\end{eqnarray*}
 
 The calculations for the identity (\ref{Theta2}) are very similar

\begin{eqnarray*}
\Theta(p+1,q+1,p+1,s+1,t) = \sum_{i=0}^{p+1} (-1)^{q+1-i}\binom{p+1}{i}\binom{s+1-i}{t-i} \\ = -(-1)^{q}\binom{s+1}{t}  
- \sum_{i=1}^{p} (-1)^{q-i}\left(\binom{p}{i-1} + \binom{p}{i}\right)\binom{s+1-i}{t-i}\\ 
+(-1)^{q-p} \binom{s-p}{t-p-1}
= -(-1)^{q}\binom{s+1}{t} - (-1)^{q-1}\binom{s}{t-1}\\
-  \sum_{i=1}^{p-1} (-1)^{q-i}\binom{p}{i}\left(\binom{s+1-i}{t-i} - \binom{s-i}{t-i-1}\right)\\ -  (-1)^{q-p}\binom{s+1-p}{t-p} + (-1)^{q-p} \binom{s-p}{t-p-1}\\
= - (-1)^{q}\binom{s}{t} - \sum_{i=1}^{p-1} (-1)^{q-i}\binom{p}{i}\binom{s-i}{t-i} - (-1)^{q-p}\binom{s-p}{t-p}\\
= - \sum_{i=0}^{p} (-1)^{q-i}\binom{p}{i}\binom{s-i}{t-i} = -\Theta(p,q,p,s,t)
\end{eqnarray*}
\end{proof}

For any natural numbers $k,l,m$ such that $l\ge 1, m\le k+l$ we define
\begin{eqnarray*}
\Phi(k,l,m) = \sum_{i=0}^m (-1)^{m-i}\binom{k+l}{i}\binom{m+l-i-1}{m-i}.
\end{eqnarray*}

\begin{lem}\label{Phi} For any natural numbers $k,l,m$ such that $l\ge 1, k<m\le k+l$, we have
\begin{eqnarray}
\label{Phi1} \Phi(k,l,k) = 1,\\
\label{Phi2} \Phi(k,l,m) = 0.
\end{eqnarray}
\end{lem}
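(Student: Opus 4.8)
The plan is to evaluate $\Phi(k,l,m)$ in closed form by recognizing it as a disguised Vandermonde convolution; both identities are then immediate special cases. The closed form to aim for is $\Phi(k,l,m)=\binom{k}{m}$, which for $m=k$ gives \eqref{Phi1} and for $m>k$ gives \eqref{Phi2}. First I would reindex the summation by setting $j=m-i$, so that $\binom{m+l-i-1}{m-i}$ becomes $\binom{l+j-1}{j}$ and the sign becomes $(-1)^j$:
$$\Phi(k,l,m)=\sum_{j=0}^{m}(-1)^{j}\binom{k+l}{m-j}\binom{l+j-1}{j}.$$

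Next I would eliminate the sign using the upper-negation identity $(-1)^{j}\binom{l+j-1}{j}=\binom{-l}{j}$, which rewrites the sum as $\sum_{j=0}^{m}\binom{-l}{j}\binom{k+l}{m-j}$. Because $k+l$ is a nonnegative integer, the factor $\binom{k+l}{m-j}$ vanishes whenever $j>m$, so the truncated range $0\le j\le m$ already contains every nonzero term of the full convolution $\sum_{j}\binom{-l}{j}\binom{k+l}{m-j}$. The Vandermonde convolution identity, in the form valid for arbitrary (here negative integer) upper arguments, then gives
$$\Phi(k,l,m)=\binom{(-l)+(k+l)}{m}=\binom{k}{m}.$$
Setting $m=k$ yields $\Phi(k,l,k)=\binom{k}{k}=1$, which is \eqref{Phi1}, and for $k<m\le k+l$ we get $\Phi(k,l,m)=\binom{k}{m}=0$, which is \eqref{Phi2}.

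The step I would be most careful about is the bookkeeping with generalized binomial coefficients, namely verifying that the finite sum $\sum_{j=0}^{m}$ genuinely reproduces the complete Vandermonde convolution; this hinges on $k+l$ being an honest nonnegative integer, so that the second factor annihilates all terms with $j>m$, and on invoking the version of Vandermonde's identity that permits a negative upper index. Apart from this, no real obstacle remains. An alternative route that stays entirely within the apparatus already developed is also available: one checks from the definitions that $\Phi(k,l,m)=\Theta(m,m,k+l,m+l-1,m)$, and then \eqref{Theta1} produces the reduction $\Phi(k,l,m)=\Phi(k,l-1,m)$, so that an induction on $l$ down to a small base case (evaluated by an elementary alternating-sum identity) yields the same conclusion. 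I expect the Vandermonde computation to be the shorter of the two.
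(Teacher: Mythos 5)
Your proof is correct, and it takes a genuinely different route from the paper. The paper proves \eqref{Phi1} and \eqref{Phi2} by induction on $l$, using the recurrences \eqref{Theta1} and \eqref{Theta2} for $\Theta$ together with the auxiliary identity $\Theta(k+l,k+l,k+l,n+l-1,n)=0$, the base case $l=1$ reducing to the alternating sum $(1-1)^{k+1}=0$; your ``alternative route'' at the end is essentially that argument. Your main argument instead produces the closed form $\Phi(k,l,m)=\binom{k}{m}$ in one stroke: the reindexing $j=m-i$, the upper negation $(-1)^j\binom{l+j-1}{j}=\binom{-l}{j}$ (valid since $l\ge 1$), and the Vandermonde convolution $\sum_j\binom{-l}{j}\binom{k+l}{m-j}=\binom{k}{m}$, which holds as a polynomial identity in the upper arguments for any integer $m\ge 0$. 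The truncation check is fine --- terms with $j>m$ die because the lower index $m-j$ is a negative integer (this alone suffices, independently of $k+l$ being a nonnegative integer), and terms with $j<0$ die because $\binom{-l}{j}=0$. Your computation is shorter, yields a single unified closed form covering both cases, and avoids the $\Theta$-recurrences entirely; what it costs is reliance on the generalized (negative upper index) Vandermonde identity, whereas the paper's route stays within elementary Pascal-rule manipulations and reuses machinery (Lemma \ref{Theta}) that is needed anyway for Lemma \ref{Psi}. A quick numerical spot check (e.g.\ $\Phi(1,2,2)=3-6+3=0=\binom{1}{2}$ and $\Phi(2,1,2)=1-3+3=1=\binom{2}{2}$) confirms the closed form.
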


\begin{proof} To prove the formula (\ref{Phi2}) we will also need to show that

\begin{eqnarray}
\label{Phi3} \Theta(k+l,k+l,k+l,n+l-1,n) = 0 \quad \mbox{for any } n\ge k+l.
\end{eqnarray}

We will prove all formulas by induction on $l$. For $l=1$ in formula (\ref{Phi1}) we have

\begin{eqnarray*}
\Phi(k,1,k) = \sum_{i=0}^k (-1)^{k-i}\binom{k+1}{i}\binom{k+1-i-1}{k-i} = \sum_{i=0}^k (-1)^{k-i}\binom{k+1}{i} \\= 1-1 - \sum_{i=0}^k (-1)^{k+1-i}\binom{k+1}{i} = 1 - \sum_{i=0}^{k+1} \binom{k+1}{i}(-1)^{k+1-i}\\
 = 1 - (1-1)^{k+1} = 1.
\end{eqnarray*}

For $l=1$  the formula (\ref{Phi3}) has the form

\begin{eqnarray*}
\Theta(k+1,k+1,k+1,n+1-1,n) = 
\sum_{i=0}^{k+1} (-1)^{k+1-i}\binom{k+1}{i}\binom{n-i}{n-i}\\
 = \sum_{i=0}^{k+1} \binom{k+1}{i}(-1)^{k+1-i}
 = (1-1)^{k+1} = 0.
\end{eqnarray*}

For $l=1$ the only possible value of $m$ is $k+1$, so for (\ref{Phi2}) we have $\Phi(k,1,k+1) = \Theta(k+1,k+1,k+1,k+1+1-1,k+1) =0$.

To complete the inductive step we will use Lemma \ref{Theta}. By (\ref{Theta1}), for $k\le p \le k+l$ we have

\begin{eqnarray}\label{Phi4}
\Phi(k,l+1,p) = \Theta(p,p,k+l+1,p+l,p)\\
\nonumber = \Theta(p,p,k+l,p+l-1,p) = \Phi(k,l,p).
\end{eqnarray}

In particular, $\Phi(k,l+1,k) =  \Phi(k,l,k)$. From (\ref{Theta2}) we obtain, for $n\ge k+l+1$,

\begin{eqnarray}
 \Theta(k+l+1,k+l+1,k+l+1,n+l,n)\\
\nonumber  = - \Theta(k+l,k+l,k+l,n+l-1,n)
\end{eqnarray}

which completes the proof of (\ref{Phi3}). To complete the proof of (\ref{Phi2}) we should consider two cases. If $m\le k+l$ then $\Phi(k,l+1,m) =  \Phi(k,l,m)$ by (\ref{Phi4}). If $m= k+l+1$ then $\Phi(k,l+1,k+l+1)=\Theta(k+l+1,k+l+1,k+l+1,k+2l+1,k+l+1)=0$ by (\ref{Phi3}).
\end{proof}

For any natural numbers $k,m,s$ such that $s\ge k$, we put $j=\min(k,m)$ and we define
\begin{eqnarray*}
\Psi(k,m,s) = \sum_{i=0}^j (-1)^{k-i}\binom{m}{i}\binom{s-i}{k-i}.
\end{eqnarray*}

\begin{lem}\label{Psi} For any natural numbers $k,m,s$ such that $s\ge k$, we have
\begin{eqnarray}\label{Psi1}
\Psi(k,m,s) &=& (-1)^k\binom{s-m}{k}\quad \mbox{if } s\ge k+m,\\
\label{Psi2}
\Psi(k,m,s) &=& 0\quad \mbox{if } k\ge 1, m\leq s< k+m
\end{eqnarray}
\end{lem}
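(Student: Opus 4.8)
The plan is to collapse the whole sum into the single closed form $\Psi(k,m,s) = (-1)^k\binom{s-m}{k}$ by a generating-function computation, and then read off both \eqref{Psi1} and \eqref{Psi2} by interpreting $\binom{s-m}{k}$ in the two ranges of $s$. First I would note that, since $i\le j\le k\le s$ throughout the summation, each $\binom{s-i}{k-i}$ is an ordinary binomial coefficient (with $0\le k-i\le s-i$), and I would rewrite it as a coefficient extraction
$$\binom{s-i}{k-i} = [x^k]\,x^i(1+x)^{s-i},$$
where $[x^k]$ denotes the coefficient of $x^k$ in a formal power series. Substituting this, using $(-1)^{k-i}=(-1)^k(-1)^i$, and pulling the sign and the linear functional $[x^k]$ out of the finite sum gives
$$\Psi(k,m,s) = (-1)^k[x^k]\,(1+x)^s\sum_{i=0}^{j}\binom{m}{i}\left(\frac{-x}{1+x}\right)^i,\qquad j=\min(k,m).$$

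The next step is to replace the upper limit $j$ by $m$. Each extra term with $j<i\le m$ (which can occur only when $j=k<m$) equals $\binom{m}{i}(-1)^i x^i(1+x)^{s-i}$, a power series whose lowest-degree monomial is $x^i$ with $i>k$; hence it contributes nothing to $[x^k]$. After this harmless extension the inner sum is a full binomial expansion, and since $-x/(1+x)$ has zero constant term the manipulation is valid as an identity of formal power series:
$$\sum_{i=0}^{m}\binom{m}{i}\left(\frac{-x}{1+x}\right)^i = \left(1-\frac{x}{1+x}\right)^m = (1+x)^{-m}.$$
Consequently $\Psi(k,m,s) = (-1)^k[x^k](1+x)^{s-m} = (-1)^k\binom{s-m}{k}$, where under the hypotheses $s\ge m$ (which holds in both cases of the lemma) the exponent $s-m$ is a nonnegative integer, so $(1+x)^{s-m}$ is a genuine polynomial and $\binom{s-m}{k}$ is the ordinary binomial coefficient.

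Finally I would split into the two regimes. If $s\ge k+m$ then $s-m\ge k\ge 0$, so $\binom{s-m}{k}$ is nonvanishing in general and \eqref{Psi1} holds verbatim. If instead $k\ge 1$ and $m\le s<k+m$, then $0\le s-m<k$, whence $\binom{s-m}{k}=0$ and \eqref{Psi2} follows. I do not expect a genuine obstacle here: the only points requiring care are the justification that extending the summation range from $j$ to $m$ leaves $[x^k]$ unchanged (for which one uses $s\ge k$, guaranteeing $s-i\ge 0$ so that no negative powers of $(1+x)$ generate spurious low-degree contributions) and the remark that $s\ge m$ forces $s-m\ge 0$, so the resulting binomial coefficient is the ordinary one and vanishes precisely when $s-m<k$. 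An induction on $s$ via Pascal's rule, in the spirit of Lemmas~\ref{Theta} and~\ref{Phi}, would give an alternative but more computational route to the same two identities.
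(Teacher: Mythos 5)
Your proof is correct, and it is genuinely different from the one in the paper. The paper never produces a single closed form: it proves \eqref{Psi1} by induction on $m$ using the recurrences for $\Theta$ from Lemma~\ref{Theta} (cases $k\le m$ and $k>m$ handled by \eqref{Theta1} and by \eqref{Theta0} together with \eqref{Theta2}, respectively), and it proves \eqref{Psi2} by two separate arguments, one reducing to $\Phi(m-s+k-1,s-k+1,k)=0$ via Lemma~\ref{Phi} when $k\le m$, the other an induction on $s-k$ with base case $s=k$ (where the sum telescopes to $(1-1)^m$) when $k>m$. Your generating-function computation instead establishes the single identity
$$\Psi(k,m,s)=(-1)^k[x^k](1+x)^{s-m}=(-1)^k\binom{s-m}{k}\qquad(s\ge m),$$
from which both cases drop out by inspecting whether $s-m\ge k$ or $0\le s-m<k$; this unifies the two statements and avoids the auxiliary lemmas entirely. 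The steps that need care are exactly the ones you flag, and they hold: the coefficient extraction $\binom{s-i}{k-i}=[x^k]x^i(1+x)^{s-i}$ is legitimate because $i\le j\le k\le s$; the extension of the summation range from $j$ to $m$ (only relevant when $j=k<m$) is harmless because each added term $(-1)^i\binom{m}{i}x^i(1+x)^{s-i}$ has valuation $i>k$ (here $s\ge m\ge i$, so these are honest polynomials with constant term $1$ in the second factor); and $s\ge m$ holds under the hypotheses of both \eqref{Psi1} and \eqref{Psi2}, so $(1+x)^{s-m}$ is a polynomial and $\binom{s-m}{k}$ carries its ordinary meaning, vanishing precisely when $s-m<k$ (with $k\ge 1$ ensuring it is not the trivial $\binom{s-m}{0}=1$). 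What the paper's longer route buys is that the intermediate identities for $\Theta$ and $\Phi$ are needed independently elsewhere (e.g.\ \eqref{Phi1} in the proof of Theorem~\ref{somewherenatural}), so the authors get Lemma~\ref{Psi} almost for free once those are in place; what your route buys is a shorter, self-contained and more conceptual proof of this particular lemma.
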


\begin{proof}
We will start with the formula (\ref{Psi1}) and use the induction on $m$. For $m=0$, $j=0$ and our formula obviously holds true. Assume that $\Psi(k,m,s) = (-1)^k\binom{s-m}{k}$ for any $k,s$ such that $s\ge k+m$. To prove the inductive step we shall consider two cases. If $k\le m$ then $\min(k,m)=\min(k,m+1)=k$, and by (\ref{Theta1})
\begin{eqnarray}
\Psi(k,m+1,s)= \Theta(k,k,m+1,s,k)  = \Theta(k,k,m,s-1,k)\\
\nonumber = \Psi(k,m,s-1) = (-1)^k\binom{s-1-m}{k} = (-1)^k\binom{s-(m+1)}{k}.
\end{eqnarray}
If $k> m$ then $\min(k,m)=\min(k,m+1)=m$, and by (\ref{Theta0}) and (\ref{Theta2})
\begin{eqnarray}\label{Psi3}
\Psi(k,m+1,s)= \Theta(m+1,k,m+1,s,k)\\
\nonumber= - \Theta(m+1,k+1,m+1,s,k)  = \Theta(m,k,m,s-1,k)\\
\nonumber = \Psi(k,m,s-1) = (-1)^k\binom{s-1-m}{k} = (-1)^k\binom{s-(m+1)}{k}.
\end{eqnarray}

We will split the proof of (\ref{Psi2}) into two cases.

If $k\le m$ then  $\Psi(k,m,s) = \Phi(m-s+k-1,s-k+1,k)$. Observe that $s-k+1\ge 1$, $m\ge k > m-s+k-1$ since $k\leq s$, $k\leq m$ and $m\leq s$. Therefore by formula (\ref{Phi2}) from Lemma \ref{Phi} we have $\Phi(m-s+k-1,s-k+1,k)=0$.

If $k > m$ then $\Psi(k,m,s) = \sum_{i=0}^m (-1)^{k-i}\binom{m}{i}\binom{s-i}{k-i}$ and we will prove our formula by induction on $s-k$. Observe that $m\ge 1$ since $k\leq s< k+m$. If $s=k$ then $$\Psi(k,m,s) = \sum_{i=0}^m (-1)^{k-i}\binom{m}{i}=(-1)^{k-m}(1-1)^m=0.$$
The inductive step follows from the formula $\Psi(k,m,s)=\Psi(k,m-1,s-1)$, cf. (\ref{Psi3}).
\end{proof}

The next lemma shows that the function $T_X$ of Theorem~\ref{naturalextension} is well defined.

\begin{lem}
For any set $X$, positive integers $m,n$, $m<n$, and a continuous function $f\in C(\sigma_m(2^X))$, the function $T_X(f)$ defined in Theorem~\ref{naturalextension} is continuous on $\sigma_n(2^X)$.
\end{lem}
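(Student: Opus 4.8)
The plan is to verify that $T_X(f)$ is \emph{sequentially} continuous; since $\sigma_n(2^X)$ is an Eberlein compact, hence Fr\'echet--Urysohn and in particular sequential, this suffices for continuity. So I would fix a sequence $\chi_{A_k}\to\chi_A$ in $\sigma_n(2^X)$ and aim to show $T_X(f)(\chi_{A_k})\to T_X(f)(\chi_A)$. As convergence in $\bor$ can be checked along subsequences, I would first normalize the sequence. Since $|A_k|\le n$ takes finitely many values, I may assume $|A_k|=r$ is constant. Because $A$ is finite and $\chi_{A_k}(x)\to\chi_A(x)$ for each $x$, we have $A\subseteq A_k$ for all large $k$; writing $C_k=A_k\setminus A$ we get $|C_k|=c:=r-|A|$. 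Finally, each $x\notin A$ lies in only finitely many $A_k$, so a greedy extraction yields a further subsequence along which the sets $C_k$ are pairwise disjoint. The crucial consequence is that for every fixed finite set the $C_k$ eventually miss it, so for a fixed $B'\subseteq A$ and any $B''_k\in[C_k]^t$ one has $\chi_{B'\cup B''_k}\to\chi_{B'}$ pointwise, whence $f(\chi_{B'\cup B''_k})\to f(\chi_{B'})$ by continuity of $f$.

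If $r\le m$ the statement is immediate: then $A_k,A\in\sigma_m(2^X)$ and $T_X(f)(\chi_{A_k})=f(\chi_{A_k})\to f(\chi_A)=T_X(f)(\chi_A)$. So the content is the case $r>m$. Here I would expand the defining sum, splitting each $B\in[A_k]^{\le m}$ as $B=B'\sqcup B''$ with $B'=B\cap A$ and $B''=B\cap C_k$, and group the finitely many terms according to $B'\subseteq A$ and $t=|B''|$. Since there are exactly $\binom{c}{t}$ choices of $B''\in[C_k]^t$ and each summand $f(\chi_{B'\cup B''})$ tends to $f(\chi_{B'})$, passing to the limit term by term gives
\[
\lim_k T_X(f)(\chi_{A_k})=\sum_{B'\in[A]^{\le m}} S(|B'|)\, f(\chi_{B'}),
\]
where, writing $a=|A|$ and $b=|B'|$ (and $r=a+c$), the coefficient is
\[
S(b)=\sum_{t=0}^{\min(c,\,m-b)}\binom{c}{t}(-1)^{m-b-t}\binom{a+c-b-t-1}{m-b-t}.
\]
Reindexing by $i=t$, this is precisely $\Psi(m-b,\,c,\,a+c-b-1)$ in the notation preceding Lemma~\ref{Psi}.

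It then remains to evaluate these coefficients and match them with $T_X(f)(\chi_A)$. When $a>m$ the inequality $a+c-b-1\ge(m-b)+c$ holds, so formula (\ref{Psi1}) gives $S(b)=(-1)^{m-b}\binom{a-b-1}{m-b}$, which is exactly the coefficient of $f(\chi_{B'})$ in the definition of $T_X(f)(\chi_A)$. When $a\le m$ (so that $c\ge1$), I expect to show $S(b)=0$ for $b<a$ and $S(a)=1$, which reproduces $T_X(f)(\chi_A)=f(\chi_A)$. For $b<a$ the three hypotheses of (\ref{Psi2}) are all met, giving $S(b)=0$. The delicate point---and the step I expect to be the main obstacle---is the boundary value $S(a)=\Psi(m-a,c,c-1)$, where the third argument $c-1$ is too small for either (\ref{Psi1}) or (\ref{Psi2}) to apply. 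Here I would use that $r>m$ forces $u:=m-a<c$, rewrite $\Psi(u,c,c-1)=\Phi(u,\,c-u,\,u)$ via the $\Phi$--$\Psi$ identity obtained by matching the two defining sums (valid since $u\le c$), and conclude $S(a)=1$ from $\Phi(u,c-u,u)=1$, i.e.\ formula (\ref{Phi1}) with $l=c-u\ge1$. This settles both cases and shows that $T_X(f)$ is continuous.
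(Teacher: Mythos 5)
Your proposal is correct and follows essentially the same route as the paper's proof: reduce to sequential continuity via the Fr\'echet--Urysohn property, normalize the converging sequence so that $A\subseteq A_k$ with $|A_k|$ constant and the tails $A_k\setminus A$ escaping every finite set, regroup the defining sum by $B'=B\cap A$, and evaluate the resulting coefficients $\Psi(m-b,c,a+c-b-1)$ using (\ref{Psi1}), (\ref{Psi2}), and the boundary identity $\Phi(m-a,c-(m-a),m-a)=1$ from (\ref{Phi1}), exactly as the paper does. The only (immaterial) differences are that you treat the case $|A_k|\le m$ explicitly and phrase the boundary coefficient as a $\Psi$-to-$\Phi$ conversion rather than writing that term separately from the start.
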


\begin{proof}
The space $\sigma_{n}(2^X)$, being Eberlein
compact, is a Fr\'echet topological space. Therefore, it is enough
to show that, for every sequence $(\chi_{A_k})$ of points of
$\sigma_{n}(2^X)\setminus \sigma_{m}(2^X)$, converging
to a point $\chi_B\in \sigma_{n}(2^X)$ we have
$T_X(f)(\chi_{A_k})\to T_X(f)(\chi_B)$. Without loss of generality we
may assume that $B\subset A_k$ for all $k$, and all sets $A_k$ have the same cardinality $r$. One can easily verify
that, for every $D\subset B$ and $C_k\subset (A_k\setminus B)$, we
have $\chi_{C_k\cup D}\to \chi_D$. Let $p=|B|$. 
For any $D\in [B]^{\le m}$, let $j_D=\min(m-|D|,r-p)$.
We shall consider two cases. If $p\le m$, i.e., $\chi_B\in \sigma_{m}(2^X)$ then we have
\begin{eqnarray*}
T_X(f)(\chi_{A_k}) &=& \sum_{B_k\in[A_k]^{\le m}}(-1)^{m-|B_k|}\binom{r - |B_k|-1}{m-|B_k|}f(\chi_{B_k})\\  &=& \sum_{C_k\in
[A_k\setminus B]^{\le m-p}}(-1)^{m-|C_k|-p}\binom{r - |C_k|-p-1}{m-|C_k|-p}f(\chi_{C_k\cup B})\\ &+&
\sum_{D\subsetneqq B}\sum_{E_k\in [A_k\setminus
B]^{\le m-|D|}}(-1)^{m-|E_k|-|D|}\binom{r - |E_k|-|D|-1}{m-|E_k|-|D|}f(\chi_{E_k\cup D})\\
&\stackrel{k\to\infty}{\longrightarrow}&
f(\chi_{B})\sum_{i=0}^{m-p}(-1)^{m-p-i}
\binom{r-p}{i}\binom{r-p-i-1}{m-p-i}
\\&+&\sum_{D\subsetneqq
B}f(\chi_{D})\sum_{i=0}^{j_D}(-1)^{m-|D|-i}
\binom{r-p}{i}\binom{r -|D|-i-1}{m-|D|-i}.
\end{eqnarray*}
By Lemma \ref{Phi}
\begin{eqnarray*}
\sum_{i=0}^{m-p}(-1)^{m-p-i}
\binom{r-p}{i}\binom{r-p-i-1}{m-p-i} = \Phi(m-p,r-m,m-p) = 1.
\end{eqnarray*}
For any $D\subsetneqq B$, $m-|D|\ge 1$ and $r-p < r-|D|-1 < (m-|D|)+(r-p)$. Hence, by formula (\ref{Psi2}) from Lemma \ref{Psi}
\begin{eqnarray*}
\sum_{i=0}^{j_D}(-1)^{m-|D|-i}
\binom{r-p}{i}\binom{r -|D|-i-1}{m-|D|-i} = \Psi(m-|D|,r-p,r-|D|-1) = 0.
\end{eqnarray*}
Therefore, $T_X(f)(\chi_{A_k})\longrightarrow f(\chi_{B})$.

In the second case, when $p>m$,  we have
\begin{eqnarray*}
T_X(f)(\chi_{A_k}) &=& \sum_{B_k\in[A_k]^{\le m}}(-1)^{m-|B_k|}\binom{r - |B_k|-1}{m-|B_k|}f(\chi_{B_k})\\
 &=&\sum_{D\in [B]^{\le m}}\sum_{E_k\in [A_k\setminus
B]^{\le m-|D|}}(-1)^{m-|E_k|-|D|}\binom{r - |E_k|-|D|-1}{m-|E_k|-|D|}f(\chi_{E_k\cup D})\\
&\stackrel{k\to\infty}{\longrightarrow}&
\sum_{D\in [B]^{\le m}}f(\chi_{D})\sum_{i=0}^{j_D}(-1)^{m-|D|-i}
\binom{r-p}{i}\binom{r -|D|-i-1}{m-|D|-i}.
\end{eqnarray*}
For any $D\in [B]^{\le m}$,  $(m-|D|)+(r-p)\le r-|D|-1$. Hence, by formula (\ref{Psi1}) from Lemma \ref{Psi}
\begin{eqnarray*}
\sum_{i=0}^{j_D}(-1)^{m-|D|-i}
\binom{r-p}{i}\binom{r -|D|-i-1}{m-|D|-i} = \Psi(m-|D|,r-p,r-|D|-1)\\ = (-1)^{m-|D|}\binom{p-|D|-1}{m-|D|} = (-1)^{m-|D|}\binom{|B|-|D|-1}{m-|D|}.
\end{eqnarray*}
Therefore, 
\begin{eqnarray*}
T_X(f)(\chi_{A_k})\longrightarrow \sum_{D\in [B]^{\le m}}(-1)^{m-|D|}\binom{|B|-|D|-1}{m-|D|}f(\chi_{D}) = f(\chi_{B}).
\end{eqnarray*}
\end{proof}

It is straightforward that $T_X$ is linear and bounded operator, so the proof of Theorem~\ref{naturalextension} is completed.

If we have a subset $Y\subset X$, then we have a natural isometric embedding $$e_Y^X:C(\sigma_m(2^Y)) \To C(\sigma_m(2^X))$$ given by $e_Y^X(f)(\chi_A) = f(\chi_{A\cap Y})$, and also a natural restriction operator $$r_Y^X: C(\sigma_m(2^X)) \To C(\sigma_m(2^Y))$$ given by $r_Y^X(f)(\chi_A) = f(\chi_{A})$. These two operators have norm one. If we have an arbitrary extension operator $T:C(\sigma_m(2^X))\To C(\sigma_n(2^X))$, then a new \emph{restricted} extension operator $$T|_Y:C(\sigma_m(2^Y))\To C(\sigma_n(2^Y))$$ is induced as $T|_Y = r_Y^X \circ S \circ  e_Y^X$. This new extension operator satisfies $\|T|_Y\|\leq \|T\|$, so in particular we have:

\begin{prp}\label{smallerset}
 If $|Y|\leq |X|$, then $\eta(\sigma_m(2^Y),\sigma_n(2^Y)) \leq \eta(\sigma_m(2^X),\sigma_n(2^X))$.
\end{prp}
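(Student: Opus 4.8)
The plan is to reduce the statement to the construction carried out in the paragraph immediately preceding it, and then to pass to the infimum over all extension operators. First I would observe that, since $|Y|\leq|X|$, there is an injection of $Y$ into $X$; this injection induces homeomorphisms $\sigma_m(2^Y)\cong\sigma_m(2^{\iota(Y)})$ and $\sigma_n(2^Y)\cong\sigma_n(2^{\iota(Y)})$, so that the isometry types of $C(\sigma_m(2^Y))$ and $C(\sigma_n(2^Y))$ depend only on the cardinality $|Y|$. Hence there is no loss of generality in assuming $Y\subseteq X$, which is precisely the setting in which the norm-one operators $e_Y^X$ and $r_Y^X$ were defined (the latter taken at the level $n$, where the formula $r_Y^X(g)(\chi_A)=g(\chi_A)$ reads verbatim).

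Next I would take an arbitrary extension operator $T:C(\sigma_m(2^X))\To C(\sigma_n(2^X))$ and form the induced operator $T|_Y = r_Y^X\circ T\circ e_Y^X:C(\sigma_m(2^Y))\To C(\sigma_n(2^Y))$. The one point that genuinely needs checking is that $T|_Y$ is again an extension operator. For $f\in C(\sigma_m(2^Y))$ and $\chi_A\in\sigma_m(2^Y)$, so that $A\subseteq Y$ and $|A|\leq m$, I would compute directly: since $r_Y^X$ is merely restriction, $(T|_Y f)(\chi_A)=\bigl(T(e_Y^X f)\bigr)(\chi_A)$; because $|A|\leq m$ the point $\chi_A$ already lies in $\sigma_m(2^X)$, and the fact that $T$ is an extension operator gives $\bigl(T(e_Y^X f)\bigr)(\chi_A)=(e_Y^X f)(\chi_A)=f(\chi_{A\cap Y})=f(\chi_A)$, the last equality using $A\subseteq Y$. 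Thus $T|_Y f$ extends $f$, as required.

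Finally I would combine the norm estimates: since $\|e_Y^X\|=\|r_Y^X\|=1$ the composition satisfies $\|T|_Y\|\leq\|T\|$, whence $\eta(\sigma_m(2^Y),\sigma_n(2^Y))\leq\|T\|$. As $T$ ranges over all extension operators on the larger pair of spaces, taking the infimum yields $\eta(\sigma_m(2^Y),\sigma_n(2^Y))\leq\eta(\sigma_m(2^X),\sigma_n(2^X))$, the inequality being vacuous if the right-hand side equals $\infty$. I expect no real obstacle here: all the analytic content — that $e_Y^X$ and $r_Y^X$ are norm-one operators, equivalently that $\sigma_m(2^Y)$ is a retract of $\sigma_m(2^X)$ through the continuous map $\chi_A\mapsto\chi_{A\cap Y}$ — has already been recorded before the statement, so the proof amounts to the extension-property verification above together with a passage to the infimum.
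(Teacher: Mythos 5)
Your argument is correct and coincides with the paper's: the proposition is stated there as an immediate consequence of the preceding paragraph, which introduces exactly the operators $e_Y^X$ and $r_Y^X$ and the restricted operator $T|_Y = r_Y^X\circ T\circ e_Y^X$ with $\|T|_Y\|\leq\|T\|$. Your additional verification that $T|_Y$ is indeed an extension operator (left implicit in the paper) is accurate, and passing to the infimum finishes the proof as you say.
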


\begin{thm}\label{somewherenatural}
Fix $m<n\leq p <\omega$ and $\varepsilon>0$. If $T:C(\sigma_m(2^X))\To C(\sigma_n(2^X))$ is any extension operator and $|X|\geq \aleph_{p-1}$, then there exists $Y\in [X]^p$ such that $\|T|_Y-T_Y\|<\varepsilon$, where $T|_Y$ is the restricted operator, and $T_Y$ is the operator from Theorem~\ref{naturalextension}.
\end{thm}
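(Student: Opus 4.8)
The plan is to transfer the problem to the level of the generalized retractions $\phe_T$ and then, via Möbius inversion on the finite Boolean lattice of subsets of $Y$, to reduce everything to an estimate on finitely many \emph{continuous} real functions; the free-set Lemma~\ref{omega_p} will supply exactly the genericity needed to pin those functions down by continuity. First I would record that, since $\sigma_m(2^X)$ is scattered, each measure $\phe_T(\chi_A)$ is purely atomic, so $\phe_T(\chi_A)=\sum_B\lambda_B^A\,\delta_{\chi_B}$ with $\sum_B|\lambda_B^A|\le\|T\|$, the sum over $B$ with $|B|\le m$. Using $(e_Y^X f)(\chi_B)=f(\chi_{B\cap Y})$ one computes that $\phe_{T|_Y}(\chi_A)=\sum_{C\subseteq Y}\mu_C^A\,\delta_{\chi_C}$ with $\mu_C^A=\sum_{B\cap Y=C}\lambda_B^A$, that $T|_Y$ and $T_Y$ already agree for $|A|\le m$, and that for $m<|A|\le n$ the norm of the functional $f\mapsto(T|_Y-T_Y)(f)(\chi_A)$ equals $\sum_{C\subseteq Y,\,|C|\le m}\big|\mu_C^A-c_C^A[C\subseteq A]\big|$, where $c_C^A=(-1)^{m-|C|}\binom{|A|-|C|-1}{m-|C|}$ is the coefficient of Theorem~\ref{naturalextension}. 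Thus it suffices to find $Y\in[X]^p$ with $\mu_C^A\approx c_C^A[C\subseteq A]$ uniformly. The crucial device is to pass to the clopen sets $V_C=\{\chi_B\in\sigma_m(2^X):B\supseteq C\}$ and the continuous functions $G_C:=T(\mathbf 1_{V_C})\in C(\sigma_n(2^X))$. The extension property gives $G_C(\chi_E)=[C\subseteq E]$ for $|E|\le m$, and since $\big(\sum_{C\subseteq C'\subseteq Y,\,|C'|\le m}\mu_{C'}^A\big)=\sum_{B\supseteq C}\lambda_B^A=\phe_T(\chi_A)(V_C)=G_C(\chi_A)$, Möbius inversion over the finite lattice of subsets of $Y$ yields the finite formula $\mu_B^A=\sum_{C:\,B\subseteq C\subseteq Y,\,|C|\le m}(-1)^{|C|-|B|}G_C(\chi_A)$. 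So the whole statement reduces to controlling the finitely many numbers $G_C(\chi_A)$ for $C,A\subseteq Y$.

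The governing fact is continuity: for fixed $C$ and fixed $W$ with $|W|\le n-1$ we have $\chi_{W\cup\{x\}}\to\chi_W$ as $x\to\infty$, hence $G_C(\chi_{W\cup\{x\}})\to G_C(\chi_W)$, and therefore for each $\delta>0$ the set $\mathrm{Bad}(C,W)=\{x:|G_C(\chi_{W\cup\{x\}})-G_C(\chi_W)|\ge\delta\}$ is finite. Fixing $\delta>0$ to be calibrated at the end, I would define for every $A\in[X]^{\le p}$
$$S(A)=\bigcup\Big\{\mathrm{Bad}(C,W):C,W\subseteq A,\ |C|\le m,\ |W|\le n-1\Big\}\setminus A,$$
a finite set disjoint from $A$, and apply Lemma~\ref{omega_p} with $p$ in the role of both parameters (legitimate since $|X|\ge\aleph_{p-1}$) to obtain $Y\in[X]^p$ with $S(A)\cap Y=\emptyset$ for every $A\subseteq Y$. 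This gives a clean \emph{freeness of peeling}: for any $C,W\subseteq Y$ as above and any $x\in Y\setminus(C\cup W)$, applying the conclusion to $C\cup W$ (of size $\le p$) forces $x\notin\mathrm{Bad}(C,W)$, i.e. $|G_C(\chi_{W\cup\{x\}})-G_C(\chi_W)|<\delta$.

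With such a $Y$ I would compute $G_C(\chi_A)$ for arbitrary $C,A\subseteq Y$ by peeling: starting from $A$, repeatedly delete an element $x$ of $A\setminus C$. At each step the current set is $W\cup\{x\}\subseteq A$ with $x\notin C\cup W$, so freeness changes the value of $G_C$ by less than $\delta$; after at most $n$ steps we arrive at $A\cap C$ (equal to $C$ when $C\subseteq A$), a set of size $\le m$, where the extension property gives the exact value $G_C(\chi_{A\cap C})=[C\subseteq A\cap C]=[C\subseteq A]$. Hence $|G_C(\chi_A)-[C\subseteq A]|<n\delta$ for all $C,A\subseteq Y$ with $|C|\le m$.

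Plugging this into the finite Möbius formula gives, for each $B\subseteq Y$ with $|B|\le m$ and each $A\in[Y]^{\le n}$,
$$\Big|\mu_B^A-\sum_{C:\,B\subseteq C\subseteq A,\,|C|\le m}(-1)^{|C|-|B|}\Big|<2^{p}\,n\delta,$$
since only the terms with $C\subseteq A$ survive in the limit. When $B\not\subseteq A$ the surviving sum is empty, so $\mu_B^A\approx0$; when $B\subseteq A$ it equals $\sum_{i=0}^{m-|B|}(-1)^i\binom{|A|-|B|}{i}=(-1)^{m-|B|}\binom{|A|-|B|-1}{m-|B|}=c_B^A$ by the elementary identity $\sum_{i=0}^{k}(-1)^i\binom{N}{i}=(-1)^k\binom{N-1}{k}$. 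Summing the errors over the at most $2^p$ sets $B$ and taking $\delta<\varepsilon\,4^{-p}n^{-1}$ gives $\sum_{C}|\mu_C^A-c_C^A[C\subseteq A]|<\varepsilon$ for every $A$, that is $\|T|_Y-T_Y\|<\varepsilon$. The step I expect to be the real obstacle is precisely the one Lemma~\ref{omega_p} is built for: the peeling requires each deleted point to lie outside a finite bad set \emph{simultaneously for all test sets} $C\subseteq Y$, even those whose elements are interleaved with or larger than the point being removed, so no greedy point-by-point construction of $Y$ can succeed and the free-set extraction on a set of cardinality $\ge\aleph_{p-1}$ is indispensable. The remaining difficulty is purely bookkeeping, namely arranging the order of quantifiers so that a single $\delta$ simultaneously controls all $4^p$-many coefficient errors.
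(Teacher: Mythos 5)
Your proof is correct, and it rests on the same skeleton as the paper's — the measure-valued map $\phe_T$, the clopen sets $O_C=\{\chi_B: B\supseteq C\}$ (your $V_C$), and Lemma~\ref{omega_p} applied on a set of cardinality $\geq\aleph_{p-1}$ — but the two key steps are executed by genuinely different means. First, to pass from the cumulative quantities $G_C(\chi_A)=\phe_T(\chi_A)(O_C)$ to the individual atoms $a_B$, you perform an exact M\"obius inversion over the subsets of $Y$ and then need only the elementary identity $\sum_{i\le k}(-1)^i\binom{N}{i}=(-1)^k\binom{N-1}{k}$, whereas the paper runs a reverse induction on $|B|$ down from $|B|=m$, matching coefficients through the identity $\Phi(k,l,k)=1$ of Lemma~\ref{Phi} and letting the error compound at each stage (hence its constant $2^{(p+2)^2}$ versus your $4^p n$); your inversion is exact, so the error stays linear in the data. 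Second, to establish $G_C(\chi_A)\approx[C\subseteq A]$ you peel $A$ down to $A\cap C$ one point at a time, feeding the finite sets $\mathrm{Bad}(C,W)$ into the free-set lemma; the paper instead gets the case $C\subseteq A$ in a single application of the weak$^*$ continuity of $\phe_T$ at $\chi_C$ (via the neighborhoods $W_A\subseteq \phe_T^{-1}(U_A)$ coded by finite sets $S^2_A$), and disposes of the case $C\not\subseteq A$ by a separate concentration argument (the sets $S^1_A$ carrying all but $\varepsilon'$ of the mass of $\phe_T(\chi_A)$). Your peeling treats both cases uniformly at the cost of an extra factor $n$, and your application of Lemma~\ref{omega_p} with both parameters equal to $p$ (so that freeness holds for \emph{all} subsets of $Y$, not only those of size $\le m$) is exactly the right quantifier arrangement for that. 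All the supporting computations — that $\mu_C^A=\sum_{B\cap Y=C}\lambda_B^A$, that the evaluation functional's norm is the total variation of an atomic measure, that each $\mathrm{Bad}(C,W)$ is finite by Fr\'echet--Urysohn continuity, and the final bookkeeping with $\delta<\varepsilon 4^{-p}n^{-1}$ — check out.
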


\begin{proof}
We fix $\varepsilon>0$ and we take a much smaller $$\varepsilon'<\frac{\varepsilon}{2^{(p+2)^2}}.$$ Consider the measure-valued function $\phe_T:\sigma_n(2^X) \To M(\sigma_m(2^X))$ associated to the operator $T$, given by $\phe_T(\chi_A) = T^\ast(\delta_{\chi_A})$. For every $A\in [X]^{\leq n}$ choose a finite set $S^1_A\subset X\setminus A$ such that 
$$|\phe_T(\chi_A)|\{\chi_C \in \sigma_m(2^X) : C\not\subset A\cup S^1_A\}<\varepsilon'$$

For $\varepsilon>0$, and reals $a,b$, we will write $a\stackrel{\varepsilon}{\sim}b$ if $|a-b|<\varepsilon$. For every $A,B\in [X]^{\le m}$ define
\begin{eqnarray}\label{eta1_1}
O_B = \{\chi_C\in \sigma_{m}(2^X)\colon B\subseteq C \} \,,\hspace{3cm}\\\nonumber
U_A = \{\mu\in M(\sigma_{m}(2^X))\colon \mu(O_B)\stackrel{\varepsilon'}{\sim} 1 \ \mbox{ for all } B\subseteq A\}\,. 
\end{eqnarray}
Since every $O_B$ such that $B\subseteq A$, is a clopen neighborhood of $\chi_A$ in $\sigma_m(2^X)$, the set $U_A$ is an open neighborhood of $\delta_{\chi_A}$ in $M(\sigma_m(2^X))$.
Define $V_A = \phe_T^{-1}(U_A)$. 
 Since $\chi_{A}\in V_A$, we can find a finite set $S^2_A\subseteq X\setminus A$ such that
 \begin{eqnarray}\label{eta1_2}
 W_A = \{\chi_B\in \sigma_{n}(2^X)\colon A\subseteq B \mbox{ and } B\cap S^2_A = \emptyset\}\subseteq V_A.
 \end{eqnarray}
 
 For every $A\in[X]^{\le n}\setminus [X]^{\le m}$, we put $S^2_A = \emptyset$.
 
 Let $S_A = S^1_A \cup S^2_A$, for $A\in[X]^{\le n}$. Using Lemma \ref{omega_p} we can find a
 set $Y\in [X]^{p}$ such that $Y\cap S_A = \emptyset$ for all $A\in [Y]^{\leq m}$.

It is enough to prove that $\|\phe_{T|_Y}(\chi_Z) - \phe_{T_Y}(\chi_Z)\| < \varepsilon$ for every $Z\in [Y]^{\leq n}$. So let us fix $Z\in [Y]^{\leq n}$. If $|Z|\leq m$ then we have $\phe_{T|_Y}(\chi_Z) = \phe_{T_Y}(\chi_Z) = \delta_{\chi_Z}$ and we are done. So we suppose that $m<|Z|\leq n$. Then, by the definition of $T_Y$ in Theorem~\ref{naturalextension}
$$\phe_{T_Y}(\chi_Z) = \sum_{B\in [Z]^{\leq m}} (-1)^{m-|B|}\binom{|Z| - |B|-1}{m-|B|} \delta_{\chi_B}$$
while $\phe_{T|_Y}(\chi_Z)$ has to be given in the form
$$\phe_{T|_Y}(\chi_Z) = \sum_{B\in [Y]^{\leq m}} a_B \delta_{\chi_B}$$
for certain scalars $a_B = \phe_{T|_Y}(\chi_Z)\{\chi_B\}$. Since $S^1_Z\cap Y\subset S_Z\cap Y = \emptyset$, it is clear that $|a_B|<\varepsilon'<\varepsilon /2^{p+1}$ if $B\not\subset Z$. So it is enough to show that
$$\left|a_B - (-1)^{m-|B|}\binom{|Z| - |B|-1}{m-|B|} \right| < \frac{\varepsilon}{2^{p+1}}$$
when $B\subset Z$. We shall prove the following stronger inequality by reverse induction on $|B|$, starting at $|B|=m$:

$$\left|a_B - (-1)^{m-|B|}\binom{|Z| - |B|-1}{m-|B|} \right| < \varepsilon_{|B|} = \frac{\varepsilon}{2^{(|B|+1)(p+2)}}$$

 The key property is that, since $S_A\cap Y = \emptyset$ for all $A\in [Y]^{\leq m}$ (in particular for all $A\subset [Z]^{\leq m}$) we have that $\chi_Z \in W_A$ for all $A\subset Z$. This implies that $\phe_T(\chi_Z) \in U_A$, hence
$$\phe_T(\chi_Z)(O_B) \stackrel{\varepsilon'}{\sim} 1 \text{ for all } B\in [Z]^{\leq m}$$
On the other hand, since $Y\cap S^1_Z = \emptyset$ we can write
$$(\star)\ \ 1\stackrel{\varepsilon'}{\sim} \phe_T(\chi_Z)(O_B) \stackrel{\varepsilon'}{\sim}  \phe_{T|_Y}(\chi_Z)\{\chi_C : B\subset C\in [Z]^{\leq m}\} = \sum_{B\subset C\in [Z]^{\leq m}} a_C$$

When $|B|=m$ we are done, because the left-hand side of the above expression equals $ (-1)^{m-|B|}\binom{|Z| - |B|-1}{m-|B|} $ and the right-hand side equals $a_B$. If $|B|<m$ we can apply an inductive hypothesis to all sets $C\in [Y]^{\leq m}$ with $|C|>|B|$, so
$$(\star\star) \ \ a_C \stackrel{\varepsilon_{|B|+1}}{\sim}  (-1)^{m-|C|}\binom{|Z| - |C|-1}{m-|C|} \ \ \text{ when }|C|>|B|.$$
On the other hand, by equation~(\ref{Phi1}) in Lemma~\ref{Phi}, applied to $k=m-|B|$ and $l=|Z|-m$, and $i$ running over all possible cardinalities of $C\setminus B$,
\begin{eqnarray*} (\star\star\star)\ \ 1 &=&  \sum_{i=0}^m (-1)^{m-|B|-i}\binom{|Z|-|B|}{i}\binom{|Z|-|B|- i -1}{m-|B|-i}\\
&=& \sum_{B\subset C\in [Z]^{\leq m}} (-1)^{m-|C|}\binom{|Z| - |C|-1}{m-|C|} \end{eqnarray*}
Putting $(\star)$, $(\star\star)$ and $(\star\star\star)$ together, we conclude that
$$\left|a_B - (-1)^{m-|B|}\binom{|Z| - |B|-1}{m-|B|} \right| < 2\varepsilon' +  2^p\varepsilon_{|B|+1} < 2^{p+1}\varepsilon_{|B|+1}<\varepsilon_{|B|}$$
\end{proof}

As a corollary to Theorem~\ref{somewherenatural} we obtain that, when $X$ is large enough, $T_X$ has minimal norm among all extension operators $C(\sigma_m(2^X))\To C(\sigma_n(2^X))$.

\begin{cor}
If $|X|\geq \aleph_{n-1}$, then $$\eta(\sigma_m(2^X),\sigma_n(2^X)) = \|T_X\| = \sum_{k=0}^m \binom{n}{k}\binom{n-k-1}{m-k}.$$
\end{cor}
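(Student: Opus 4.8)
The plan is to combine the two main ingredients already established: the explicit extension operator $T_X$ from Theorem~\ref{naturalextension}, which gives an \emph{upper} bound for $\eta$, and the near-canonicity result of Theorem~\ref{somewherenatural}, which forces any competing extension operator to have norm at least (essentially) $\|T_X\|$, yielding the matching \emph{lower} bound. By Proposition~\ref{smallerset} it suffices to work with $X$ of cardinality exactly $\aleph_{n-1}$, and since the right-hand side formula does not depend on $|X|$, the general case $|X|\geq\aleph_{n-1}$ follows at once.

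For the upper bound I would first verify that the stated closed-form expression is indeed $\|T_X\|$. The function $\phe_{T_X}(\chi_A) = T_X^\ast(\delta_{\chi_A})$ is the purely atomic measure $\sum_{B\in[A]^{\leq m}}(-1)^{m-|B|}\binom{|A|-|B|-1}{m-|B|}\delta_{\chi_B}$, so its total variation norm is $\sum_{B\in[A]^{\leq m}}\binom{|A|-|B|-1}{m-|B|}$. Since $\|T_X\| = \sup_{A}\|\phe_{T_X}(\chi_A)\|$ and the binomial coefficients $\binom{|A|-|B|-1}{m-|B|}$ grow with $|A|$, the supremum is attained at $|A|=n$; grouping the sets $B$ by cardinality $k=|B|$ and counting the $\binom{n}{k}$ many $k$-subsets of a fixed $n$-set gives $\sum_{k=0}^m\binom{n}{k}\binom{n-k-1}{m-k}$, which is the claimed value. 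Thus $\eta(\sigma_m(2^X),\sigma_n(2^X))\leq\|T_X\|=\sum_{k=0}^m\binom{n}{k}\binom{n-k-1}{m-k}$.

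For the lower bound, let $T$ be an arbitrary extension operator and fix $\eps>0$. Apply Theorem~\ref{somewherenatural} with $p=n$ (legitimate since $|X|\geq\aleph_{n-1}$) to obtain $Y\in[X]^n$ with $\|T|_Y-T_Y\|<\eps$. Because $Y$ has exactly $n$ elements, evaluating $\phe_{T_Y}$ at $\chi_Y$ recovers the full sum whose variation norm is precisely $\sum_{k=0}^m\binom{n}{k}\binom{n-k-1}{m-k}=\|T_X\|$, so $\|T_Y\|=\|T_X\|$. Then
\begin{eqnarray*}
\|T\| \;\geq\; \|T|_Y\| \;\geq\; \|T_Y\| - \|T|_Y - T_Y\| \;>\; \|T_X\| - \eps,
\end{eqnarray*}
using $\|T|_Y\|\leq\|T\|$ from the discussion preceding Proposition~\ref{smallerset}. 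Letting $\eps\to0$ gives $\|T\|\geq\|T_X\|$, hence $\eta\geq\|T_X\|$, and combined with the upper bound we get equality.

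The only genuinely delicate point is confirming that the supremum defining $\|T_X\|$ is attained at $|A|=n$ rather than at some smaller cardinality, i.e.\ that $\sum_{B\in[A]^{\leq m}}\binom{|A|-|B|-1}{m-|B|}$ is nondecreasing in $|A|$ on the range $m<|A|\leq n$; this is a routine monotonicity check on binomial coefficients. Everything else is bookkeeping: matching the parameters $k=m-|B|$, $l=|Z|-m$ in the invocation of Theorem~\ref{somewherenatural} to the present $p=n$, and observing that the target formula is independent of $|X|$ so that Proposition~\ref{smallerset} closes the gap between $|X|=\aleph_{n-1}$ and larger $X$.
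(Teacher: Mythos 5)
Your proposal is correct and follows essentially the same route as the paper: the upper bound comes from the explicit operator $T_X$ (whose norm is computed from the total variation of $\phe_{T_X}(\chi_A)$, maximized at $|A|=n$), and the lower bound comes from applying Theorem~\ref{somewherenatural} with $p=n$ to get $\|T\|\geq\|T|_Y\|\geq\|T_Y\|-\eps=\|T_X\|-\eps$. The extra details you supply (the monotonicity check in $|A|$ and the reduction via Proposition~\ref{smallerset}) are sound but not needed beyond what the paper's one-line argument already uses.
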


\begin{proof}
Let $T:C(\sigma_m(2^X))\To C(\sigma_n(2^X))$ be an extension operator, and fix $\varepsilon>0$. Consider the set $Y$ given by Theorem~\ref{somewherenatural} for $p=n$. Then, $\|T_Y\| = \|T_X\|$ is the same number above, and since $\|T|_Y - T_Y\|<\varepsilon$, we have that
$\|T\| \geq \|T|_Y\| \geq \|T_Y\|-\varepsilon = \|T_X\|-\varepsilon$.  
\end{proof}

For the special case of  $n=m+1$, one can easily calculate that the above formula has much simpler form:

\begin{cor}
If $|X|\geq \aleph_{m}$, then $$\eta(\sigma_m(2^X),\sigma_{m+1}(2^X)) = 2^{m+1} -1.$$
\end{cor}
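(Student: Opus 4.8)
The plan is to obtain this directly from the preceding corollary by specializing to $n=m+1$ and evaluating the resulting binomial sum; there is essentially no independent content beyond an elementary computation. First I would note that the hypothesis $|X|\geq\aleph_m$ is exactly the hypothesis $|X|\geq\aleph_{n-1}$ of the preceding corollary in the case $n=m+1$. Hence that corollary applies and yields
$$\eta(\sigma_m(2^X),\sigma_{m+1}(2^X)) = \sum_{k=0}^m \binom{m+1}{k}\binom{(m+1)-k-1}{m-k} = \sum_{k=0}^m \binom{m+1}{k}\binom{m-k}{m-k}.$$

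Next I would simplify each summand. Since $\binom{m-k}{m-k}=1$ for every index $0\leq k\leq m$, the second binomial factor disappears and the sum collapses to $\sum_{k=0}^m\binom{m+1}{k}$. Finally I would recognize this as the binomial expansion of $(1+1)^{m+1}=\sum_{k=0}^{m+1}\binom{m+1}{k}=2^{m+1}$ with only its top term removed, so that
$$\sum_{k=0}^m\binom{m+1}{k} = 2^{m+1}-\binom{m+1}{m+1} = 2^{m+1}-1,$$
which is the claimed value.

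I do not anticipate any genuine obstacle: the optimality of the norm and the exact formula have already been established in the preceding corollary, and what remains is purely the evaluation of a standard binomial sum. The only point deserving a moment of care is bookkeeping with the index conventions, namely checking that the exponent hypothesis matches under the substitution $n=m+1$ and that $\binom{(m+1)-k-1}{m-k}$ really is $\binom{m-k}{m-k}=1$ across the entire range of summation; both are immediate.
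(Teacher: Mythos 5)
Your proposal is correct and matches the paper's intent exactly: the paper presents this corollary as an immediate specialization of the preceding one to $n=m+1$, where $\binom{n-k-1}{m-k}=\binom{m-k}{m-k}=1$ and the sum collapses to $\sum_{k=0}^m\binom{m+1}{k}=2^{m+1}-1$. Nothing further is needed.
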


We finish this section with a remark that, although Theorem~\ref{somewherenatural} deals with sets of high cardinality, it is possible to express in terms of finite sets the fact that the extension operators $T_X$ are natural and canonical. For this, given an injective map $u:Y\To X$ and $m<\omega$, consider the operator $e_u:C(\sigma_m(2^Y))\To C(\sigma_m(2^X))$ given by $e_u(f)(\chi_A) = f(\chi_{u^{-1}(A)})$, that naturally generalizes the operators $e_Y^X$ introduced before.

\begin{thm}\label{naturalfinite}
Let $m,n$ be positive integers such that $m<n$.
Suppose that we have $M>0$, and for each finite set $X$ we have an extension operator $\tilde{T}_X:C(\sigma_m(2^X))\To C(\sigma_n(2^X))$ with $\|\tilde{T}_X\|\leq M$ in such a way that the diagram
$$
\begin{CD} C(\sigma_m(2^X)) @>\tilde{T}_X>> C(\sigma_n(2^X)) \\
 @A{e_u}AA @A{e_u}AA\\
 C(\sigma_m(2^Y)) @>\tilde{T}_Y>> C(\sigma_n(2^Y))
\end{CD}
$$
commutes for any injective map $u:Y\To X$ between finite sets. Then $\tilde{T}_X = T_X$ for all $X$.
\end{thm}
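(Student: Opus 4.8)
The plan is to use the naturality to glue the finite family $\{\tilde T_X\}$ into a single bounded extension operator on an infinite set of large cardinality, and then to invoke Theorem~\ref{somewherenatural} to identify that operator with the canonical one. Two immediate consequences of the hypothesis set this up. Applying the commuting diagram to an inclusion $u\colon Y\hookrightarrow X$ of finite sets gives $\tilde T_X\circ e_Y^X=e_Y^X\circ\tilde T_Y$; composing on the left with $r_Y^X$ and using $r_Y^X\circ e_Y^X=\mathrm{id}$ yields the coherence relation $\tilde T_X|_Y=\tilde T_Y$ for all finite $Y\subseteq X$, where $\tilde T_X|_Y:=r_Y^X\circ\tilde T_X\circ e_Y^X$ is the restricted operator. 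Applying the diagram instead to a bijection shows that $\|\tilde T_Y-T_Y\|$ depends only on $|Y|$: conjugation by the isometric isomorphism $e_u$ carries $(\tilde T_Y,T_Y)$ to $(\tilde T_{Y'},T_{Y'})$ whenever $|Y|=|Y'|$. Here I also use that the canonical family $\{T_X\}$ commutes with $e_u$ for bijections $u$, which is immediate from its defining formula in Theorem~\ref{naturalextension}.

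Next I construct the infinite operator. Fix $p\ge n$ and an infinite set $\Gamma$ with $|\Gamma|\ge\aleph_{p-1}$. The functions of the form $e_Y^\Gamma g$ with $Y\in[\Gamma]^{<\omega}$ and $g\in C(\sigma_m(2^Y))$ are exactly those factoring through a finite coordinate projection; they form a subalgebra of $C(\sigma_m(2^\Gamma))$ containing the constants and separating points, hence a dense subspace $\mathcal{D}$ by the Stone--Weierstrass theorem. I define $T^\circ\colon\mathcal{D}\To C(\sigma_n(2^\Gamma))$ by $T^\circ(e_Y^\Gamma g)=e_Y^\Gamma(\tilde T_Y g)$. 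This is well defined: using $e_Y^\Gamma=e_{Y''}^\Gamma\circ e_Y^{Y''}$ for $Y\subseteq Y''$, two representations of the same element can be compared inside a common $Y''\supseteq Y\cup Y'$, where injectivity of $e_{Y''}^\Gamma$ together with the naturality diagrams for the inclusions $Y\hookrightarrow Y''$ and $Y'\hookrightarrow Y''$ forces the two values to agree. Since $e_Y^\Gamma$ is isometric and $\|\tilde T_Y\|\le M$, we get $\|T^\circ\|\le M$, so $T^\circ$ extends to a bounded operator $T_\infty\colon C(\sigma_m(2^\Gamma))\To C(\sigma_n(2^\Gamma))$ with $\|T_\infty\|\le M$. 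It is an extension operator: for $|A|\le m$ and $f=e_Y^\Gamma g$ with $A\subseteq Y$ one computes $T_\infty f(\chi_A)=(\tilde T_Y g)(\chi_A)=g(\chi_A)=f(\chi_A)$, and this persists for all $f$ by density and continuity of evaluation. Finally, the very definition of $T^\circ$ gives $T_\infty\circ e_Y^\Gamma=e_Y^\Gamma\circ\tilde T_Y$ on all of $C(\sigma_m(2^Y))$, whence $T_\infty|_Y=r_Y^\Gamma\circ e_Y^\Gamma\circ\tilde T_Y=\tilde T_Y$ for every finite $Y\subseteq\Gamma$.

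With $T_\infty$ in hand I apply Theorem~\ref{somewherenatural} to $T_\infty$ for this $p$: for each $\varepsilon>0$ there is $Y\in[\Gamma]^{p}$ with $\|T_\infty|_Y-T_Y\|<\varepsilon$, that is, $\|\tilde T_Y-T_Y\|<\varepsilon$. By the homogeneity noted above, $\|\tilde T_X-T_X\|<\varepsilon$ for \emph{every} $X$ with $|X|=p$; as $\varepsilon>0$ and $p\ge n$ are arbitrary, $\tilde T_X=T_X$ for all $X$ with $|X|\ge n$. For $|X|<n$ I pick $Y\supseteq X$ with $|Y|=n$; then $\tilde T_X=\tilde T_Y|_X$ by coherence, $\tilde T_Y=T_Y$ by the previous case, and $T_Y|_X=T_X$ because the canonical family is itself coherent --- as in the proof that $T_X$ is well defined, a direct computation with the binomial identity of Lemma~\ref{Psi} shows that $\{T_X\}$ commutes with all embeddings $e_u$. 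Hence $\tilde T_X=T_X$ in every case.

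The main obstacle is the construction of $T_\infty$: one must check that the finite operators glue into a single \emph{continuous} operator on the infinite, non-metrizable compactum, and here the uniform bound $M$ is indispensable, since it is precisely what allows the densely defined $T^\circ$ to extend to all of $C(\sigma_m(2^\Gamma))$. The well-definedness of $T^\circ$ and the identification $T_\infty|_Y=\tilde T_Y$ are the points where the full strength of naturality under inclusions is used; everything afterwards is a clean application of Theorem~\ref{somewherenatural} together with the symmetry under bijections.
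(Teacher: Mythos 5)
Your proof is correct and follows essentially the same route as the paper: glue the finite operators along the naturality diagrams into a bounded extension operator on a set of large cardinality, apply Theorem~\ref{somewherenatural} to locate a copy where it is $\varepsilon$-close to the canonical operator, and transfer back by a bijection. The only difference is cosmetic — you also spell out the case $|X|<n$ (where Theorem~\ref{somewherenatural} cannot be applied directly with $p=|X|$) via coherence under inclusions, a detail the paper's proof passes over.
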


\begin{proof}
Let $W$ be any set of cardinality $\aleph_\omega$. Then by the Stone-Weierstrass Theorem $$\mathfrak{D}_W = \bigcup_{Y\in [W]^{<\omega}}e_Y^W(C(\sigma_m(2^Y))$$
is a dense subspace of $C(\sigma_m(2^W))$. The function $$\tilde{T}_W: \mathfrak{D}_W \To C(\sigma_n(2^W))$$
given by $\tilde{T}_W(e_Y^W(f)) = e_Y^W(\tilde{T}_Y(f))$ is well defined independently of the choice of $Y\in [W]^{<\omega}$, and is a linear function with $\|\tilde{T}_W\|\leq M$, so it extends to a globally defined extension operator
$$\tilde{T}_W: C(\sigma_m(2^W)) \To C(\sigma_n(2^W))$$
Now, we consider any finite set $X$, $\varepsilon>0$, and we shall check that $\|\tilde{T}_X - T_X\|<\varepsilon$. By Theorem~\ref{somewherenatural}, we can find $Y\in [W]^{|X|}$ such that $\|\tilde{T}_Y - T_Y\| = \|\tilde{T}_W|_Y - T_Y\|<\varepsilon$. But, if $u:Y\To X$ is a bijection,  the diagram in the statement of Theorem~\ref{naturalfinite} commutes for both the operators $\tilde{T}_\ast$ and $T_\ast$, so the inequality is transferred to $\|\tilde{T}_X - T_X\|<\varepsilon$.
\end{proof}

\section{Extension operators on spaces $C(\sigma_m(2^{\aleph_1}))$}

\begin{thm}\label{omega1}
If $|X|=\aleph_1$, then $\eta(\sigma_m(2^X),\sigma_n(2^X)) = 2n-2m+1$.
\end{thm}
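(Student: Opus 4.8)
The plan is to establish the two inequalities $\eta(\sigma_m(2^{\aleph_1}),\sigma_n(2^{\aleph_1}))\le 2n-2m+1$ and $\ge 2n-2m+1$ separately. The bound is only a genuine improvement over the canonical operator $T_X$ of Theorem~\ref{naturalextension} when $m\ge 2$, which is the case carrying the real content, since the gain comes entirely from the order of $\omega_1$. Throughout I identify $X$ with $\omega_1$, write $A=\{a_1<\dots<a_r\}$ for a set of size $r$, and call a subset $B$ of a set \emph{order-convex} if its index set is an interval for the induced order.

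For the lower bound I would rerun the argument of Theorem~\ref{somewherenatural} essentially verbatim, but feed it Lemma~\ref{omega_1} in place of Lemma~\ref{omega_p}. Fix an extension operator $T$ and $\varepsilon>0$, choose the finite exceptional sets $S_A=S^1_A\cup S^2_A\subseteq X\setminus A$ exactly as there, and let $Z=\{z_1<\dots<z_n\}$ be produced by Lemma~\ref{omega_1} for the map $A\mapsto S_A$. The point of the weak free-set lemma is that, for $A=\{z_i:i\in I\}$, the avoidance $z_j\notin S_A$ holds only for indices $j$ outside $[\min I,\max I]$; hence $S_A\cap Z=\emptyset$ exactly when $A$ is order-convex. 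Consequently $\chi_Z\in W_A$, and so $\phe_T(\chi_Z)\in U_A$, for every order-convex $A\in[Z]^{\le m}$. Writing $\phe_T(\chi_Z)=\sum_B a_B\,\delta_{\chi_B}$, this gives
\[
\sum_{B\subseteq C\in[Z]^{\le m}} a_C \;\stackrel{\varepsilon'}{\sim}\; 1 \qquad\text{for every } B\subseteq Z \text{ of order-diameter } \le m-1,
\]
the case $B=\emptyset$ (take $A$ a singleton) giving total mass $\approx 1$. It then remains to prove the purely finite statement that any real vector $(a_C)$ satisfying these equalities has $\sum_C|a_C|\ge 2n-2m+1$. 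I would do this by linear-programming duality: the vector putting $a_C=1$ on the $n-m+1$ order-convex $m$-blocks, $a_C=-1$ on the $n-m$ order-convex $(m-1)$-blocks that omit the indices $1$ and $n$, and $0$ elsewhere, satisfies all the equalities and has $\ell_1$-norm exactly $2n-2m+1$; a matching dual certificate (weights on the controlled marginals whose back-substitution stays in $[-1,1]$ on every $C$) shows optimality. Thus $\|T\|\ge\|\phe_T(\chi_Z)\|\ge 2n-2m+1-O(\varepsilon)$, and $\varepsilon\to0$ finishes this direction.

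For the upper bound I would build the operator layer by layer, raising $n$ by one at a time so that the norm grows \emph{additively} by $2$ (from $1$ at $n=m$ to $2(n-m)+1$). Given the measure-valued map $\phe$ already defined and continuous on $\sigma_{n-1}(2^{\omega_1})$ with $\|\phe\|\le 2(n-1)-2m+1$, I must define $\phe(\chi_A)$ for $|A|=n$ so that $\phe(\chi_A)\to\phe(\chi_{A\setminus\{a\}})$ whenever a single element $a\in A$ escapes, while $\|\phe(\chi_A)\|\le\|\phe\|+2$. The well-order enters through the distinguished elements $\min A$ and $\max A$: the natural attempt $\phe(\chi_A)=\phe(\chi_{A\setminus\{\max A\}})+\nu(A)$ reduces matters to constructing a norm-$2$ increment $\nu(A)$ that vanishes as $\max A$ escapes and is compatible with escapes of the lower elements.

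The hard part is the continuity of such an increment, and I expect this to be the principal obstacle. Escapes of $\min A$ and $\max A$ are easy to accommodate, but when an \emph{interior} element escapes, the weak\*-limit of any naive order-type formula (for instance the alternating measure $\sum_i\delta_{\chi_{\{a_i,a_{i+1}\}}}-\sum_{\text{interior }i}\delta_{\chi_{\{a_i\}}}$, which realizes the extremal value $2n-2m+1$ at a generic $Z$) splits a single Dirac mass $\delta_{\chi_{A\setminus\{a_i\}}}$ into several atoms and destroys continuity; this is precisely why no such formula can beat $T_X$ on sets lacking extra structure. Realizing the sharp $+2$ increment by a genuinely continuous assignment is where the special structure of $\omega_1$ must be used essentially—via a transfinite construction along $\omega_1$, or a Dugundji-type continuous extension of $\phe$ across the top layer $\sigma_n(2^{\omega_1})\setminus\sigma_{n-1}(2^{\omega_1})$ that is available over a well-ordered index set but not over an arbitrary one. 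By contrast, the lower-bound computation is a finite and essentially mechanical optimization once Lemma~\ref{omega_1} has delivered the order-convex marginals.
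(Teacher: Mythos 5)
Your lower-bound argument is essentially the paper's: the same substitution of Lemma~\ref{omega_1} into the machinery of Theorem~\ref{somewherenatural} produces a set $Z=\{z_1,\dots,z_n\}$ with $\mu(O_B)\stackrel{\varepsilon'}{\sim}1$ for every order-convex $B\in[Z]^{\le m}$, where $\mu=\phe_T(\chi_Z)$. The one step you leave unproved is the finite optimization, which you delegate to an unexhibited ``dual certificate.'' The certificate is easy and you should write it down: with $A_i=\{z_i,\dots,z_{i+m-1}\}$ for $i=1,\dots,n-m+1$ and $\tilde{A}_i=\{z_i,\dots,z_{i+m-2}\}$ for $i=2,\dots,n-m+1$, the singletons $\{\chi_{A_i}\}$ carry mass $\stackrel{\varepsilon'}{\sim}1$, the sets $O_{\tilde{A}_i}\setminus\{\chi_{A_{i-1}},\chi_{A_i}\}$ carry mass $\stackrel{3\varepsilon'}{\sim}-1$ by inclusion--exclusion, and these $2n-2m+1$ sets are pairwise disjoint, so $\|\mu\|\ge 2n-2m+1-O(\varepsilon)$ directly. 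This is a routine completion, not a flaw in the approach.

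The upper bound, however, contains a genuine gap, and you identify it yourself: you reduce to constructing a continuous norm-$2$ increment per layer and then state that continuity across escapes of \emph{interior} elements is ``the principal obstacle,'' without overcoming it. That obstacle is where the entire content of this direction lies. The missing idea is a concrete combinatorial device on $\omega_1$: fix for each $\beta<\omega_1$ an auxiliary order $<_\beta$ of order type $\omega$ on $\beta$, and for $A=\{\alpha_1<\dots<\alpha_k\}$ let $\Gamma_i(A)$ be the first $m-1$ elements of $\{\alpha_1,\dots,\alpha_{i-1}\}$ in the $<_{\alpha_i}$-order. The operator
$$Tf(\chi_A)=\sum_{i=m}^{k}f\bigl(\chi_{\{\alpha_i\}\cup\Gamma_i(A)}\bigr)-\sum_{i=m+1}^{k}f\bigl(\chi_{\Gamma_i(A)}\bigr)$$
has norm $2n-2m+1$ and is defined in one stroke rather than layer by layer. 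Its continuity works precisely because $\Gamma_i(A)$ is chosen by the countable order attached to $\alpha_i$ rather than by the ordinal order on $A$: after homogenizing a convergent sequence $\chi_{A^\xi}\to\chi_A$, the persistent elements below $\alpha_{i[j]}$ form an initial segment of $\{\alpha^\xi_1,\dots,\alpha^\xi_{i[j]-1}\}$ in the $<_{\alpha_{i[j]}}$-order (each $<_\beta$ has only finite initial segments, and the escaping coordinates eventually leave any of them), so $\Gamma_{i[j]}(A^\xi)=\Gamma_j(A)$ and the spurious terms cancel telescopically --- exactly the cancellation your alternating formula $\sum_i\delta_{\chi_{\{a_i,a_{i+1}\}}}-\sum_i\delta_{\chi_{\{a_i\}}}$ fails to achieve. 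Without this ladder-system mechanism, or an equivalent one, your $[\le]$ direction is a plan rather than a proof.
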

\begin{proof}

Inequality $[\leq]$. Suppose $X=\omega_1$, let $<$ be the usual order of ordinals, and for each $\beta<\omega_1$, let $<_\beta$ (the $\beta$-order) be an order on $\beta$ of order type $\omega$. Given a set $A = \{\alpha_1<\alpha_2<\ldots<\alpha_k\}\subset \omega_1$ and $i\in\{m,\ldots,k\}$, let $\Gamma_i(A)$ be the set consisting of the first $m-1$ elements of $\{\alpha_1,\ldots,\alpha_{i-1}\}$ according to the $<_{\alpha_i}$-order.

We define an extension operator $T:C(\sigma_m(2^X))\To C(\sigma_n(2^X))$ by the formula $$Tf(\chi_A) = \sum_{i=m}^k f(\chi_{\{\alpha_i\}\cup \Gamma_i(A)}) - \sum_{i=m+1}^k f(\chi_{\Gamma_i(A)})$$
when $A=\{\alpha_1<\alpha_2<\ldots<\alpha_k\}$ is as above for some $k>m$ (and, of course, $Tf(\chi_A) = f(\chi_A)$ if $|A|\leq m$. The only point to be checked is that $Tf$ is indeed a continuous function on $C(\sigma_n(2^X))$ whenever $f\in C(\sigma_m(2^X))$ is continuous, because once this is established it is straightforward that $T$ is linear and $\|T\| = 2n-2m+1$.


So we fix $f\in C(\sigma_m(2^X))$. Since $\sigma_m(2^X)$ is a Fr\'{e}chet-Urysohn space, it is enough to check the sequential continuity of $Tf$. So suppose that we have a sequence $\{\chi_{A^\xi}\}_{\xi<\omega}$ that converges to $\chi_A$ in $\sigma_m(2^X)$. 
By passing to a subsequence, we can suppose that our sequence $\{\chi_{A^\xi}\}_{\xi<\omega}$ has the following homogeneity properties:

\begin{itemize}

\item All $A^\xi$ have the same cardinality $p$ (and $p>m$, otherwise it is obvious) and we write $A^\xi = \{\alpha_1^\xi<\alpha_2^\xi<\cdots<\alpha_p^\xi\}$.

\item The orders $<_{\alpha_i^\xi}$ behave homogeneously on all $A^\xi$. That is, for $\xi,\zeta<\omega$, and for $u,v<i\leq p$,
$$\alpha_u^\xi <_{\alpha_i^\xi} \alpha_v^\xi \iff \alpha_u^\zeta <_{\alpha_i^\zeta} \alpha_v^\zeta$$

\item The limit set $A$ is of the form $A=\{\alpha_{i[1]}<\cdots<\alpha_{i[k]}\}$ where each $\{\alpha^\xi_{i[j]}\}_{\xi<\omega}$ is a constant sequence equal to $\alpha_{i[j]}$, while for other $j$'s, the sequence $\{\alpha_j^\xi\}_{\xi<\omega}$ has no infinite repetitions. 
 
\end{itemize} 
 
 For the last two condition to happen, it must be the case that, for each $j$ and $\xi$, the set $\{\alpha_{i[1]},\ldots,\alpha_{i[j-1]}\}$ is an initial segment of $\{\alpha^\xi_1,\ldots,\alpha^\xi_{i[j]-1}\}$ in the $<_{\alpha_{i[j]}}$-order, and this makes the operation $\Gamma_i$ to behave nicely. We can now compute where $Tf(\chi_{A^\xi})$ converges (explanations are given below):

\begin{eqnarray*}
Tf(\chi_{A^\xi}) &=& \sum_{i=m}^p f(\chi_{\{\alpha_i^\xi\}\cup \Gamma_i(A^\xi)} )- \sum_{i=m+1}^p f(\chi_{\Gamma_i(A^\xi)})\\
&\xrightarrow{\xi\rightarrow\infty}& \sum_{i= m}^p f(\chi_{A\cap (\{\alpha_i\}\cup \Gamma_i(A^\xi))}) - \sum_{i=m+1}^p f(\chi_{A\cap \Gamma_i(A^\xi)})\\
&=& f(\chi_{A\cap (\{\alpha_m\}\cup \Gamma_m(A^\xi))}) + \sum_{i[j]>m}f(\chi_{\{\alpha_{i[j]}\}\cup \Gamma_j(A)}) - f(\chi_{\Gamma_j(A)})\\
\end{eqnarray*}

One remark is that $\xi$ seems to wrongly remain as a parameter after taking limits on $\xi$, but the point is that by the homogeneity properties assumed for the $A^\xi$'s, the second and third line expressions above are indeed independent on the choice of $\xi$. The last equality is because, on the one hand $\Gamma_{i[j]}(A^\xi) = \Gamma_j(A)$, for $i[j]>m$, due to the initial segment property stated just before the computation, and on the other hand, the terms corresponding to indexes $\alpha_i$ with $i\not\in\{m\}\cup \{i[j]\}_{j=1,\ldots,k}$ cancel on both sides.

Now, focusing on the expression that we obtained in the third line above, in the second sum, the summands for $j\leq m$ cancel in telescoping sum. Let us consider $i[a]$ and $i[b]$ the first and last index among the $i[j]$'s such that $i[j]>m$ and $j\leq m$, if there are any such indexes at all (we consider the other case later). We obtain:

\begin{eqnarray*}
Tf(\chi_{A^\xi}) &\longrightarrow& f(\chi_{A\cap (\{\alpha_m\}\cup \Gamma_m(A^\xi))} )+ f(\chi_{\{\alpha_{i[1]},\ldots,\alpha_{i[b]}\}}) - f(\chi_{\{\alpha_{i[1]},\ldots,\alpha_{i[a-1]}\}}) \\ &+& \sum_{j>m}f(\chi_{\{\alpha_{i[j]}\}\cup \Gamma_j(A)}) - f(\chi_{\Gamma_j(A)})\\
\end{eqnarray*}

But now, we observe that the first and third term are just the same thing and they cancel, so we get

\begin{eqnarray*}
Tf(\chi_{A^\xi}) &\longrightarrow& f(\chi_{\{\alpha_{i[1]},\ldots,\alpha_{i[b]}\}}) + \sum_{j>m}f(\chi_{\{\alpha_{i[j]}\}\cup \Gamma_j(A)}) - f(\chi_{\Gamma_j(A)})\\
\end{eqnarray*}

which is precisely $Tf(\chi_A)$, because $\{\alpha_{i[1]},\ldots,\alpha_{i[b]}\} = A$ if $|A|\leq m$, while $b=m$ if $|A|>m$. We were left the case when there were no indexes such that $i[j]>m$ and $j\leq m$. This means that $\alpha_{i[j]} = \alpha_j$ for $j\leq m$, and we get again the expression of $Tf(\chi_A)$.

Inequality $[\geq]$. Fix $\varepsilon>0$, $X$ uncountable, and $T:C(\sigma_m(2^X))\To C(\sigma_n(2^X))$ an extension operator, and we shall prove that $\|T\| \geq 2n-2m+1-\varepsilon$. We proceed as in the proof of Theorem~\ref{somewherenatural}. Pick a much smaller $\varepsilon'< 6n\varepsilon$. Consider the measure-valued function $\phe_T:\sigma_n(2^X) \To \|T\| M(\sigma_m(2^X))$ associated with the operator $T$, given by $\phe_T(\chi_A) = T^\ast(\delta_{\chi_A})$.

For every $A,B\in [X]^{\le m}$ define
\begin{eqnarray}\label{eta1_1}
O_B = \{\chi_C\in \sigma_{m}(2^X)\colon B\subseteq C \} \,,\hspace{3cm}\\\nonumber
U_A = \{\mu\in M(\sigma_{m}(2^X))\colon \mu(O_B)\stackrel{\varepsilon'}{\sim} 1 \ \mbox{ for all } B\subseteq A\}\,. 
\end{eqnarray}
Since every $O_B$ such that $B\subseteq A$, is a clopen neighborhood of $\chi_A$ in $\sigma_m(2^X)$, the set $U_A$ is an open neighborhood of $\delta_{\chi_A}$ in $M(\sigma_m(2^X))$.
Define $V_A = \phe_T^{-1}(U_A)$. 
 Since $\chi_{A}\in V_A$, we can find a finite set $S_A\subseteq X\setminus A$ such that
 \begin{eqnarray}\label{eta1_2}
 W_A = \{\chi_B\in \sigma_{n}(2^X)\colon A\subseteq B \mbox{ and } B\cap S_A = \emptyset\}\subseteq V_A.
 \end{eqnarray}
 Using Lemma \ref{omega_1} we can find a
 set $Z = \{z_1,\ldots,z_n\}\in [X]^{n}$ such that $z_j\not\in S_{\{z_i : i\in I\}}$ whenever $j<\min(I)$ or $j>\max(I)$. We shall check that $\mu = \phe_T(\chi_Z)$ satisfies $\|\mu\| \geq 2n-2m+1-\varepsilon$, which finishes the proof. For $i=1,\ldots,n-m+1$ let $A_i = \{z_i,\ldots,z_{i+m-1}\}$. Notice that, by the key property of $Z$, $Z \cap S_{A_i} = \emptyset$, hence $\chi_{Z}\in W_{A_i}$, hence $\mu = \phe_T(Z)\in U_{A_i}$,  hence 
$$(\star) \ \ \mu(O_{A_i}) = \mu(\{\chi_{A_i}\})\stackrel{\varepsilon'}{\sim} 1 \ \ \text{ for } i=1,\ldots,n-m+1$$
Now, for $i=2,\ldots,n-m+1$, consider $\tilde{A}_i = \{z_i,\ldots,z_{i+m-2}\}$. Again, we have that  $Z \cap S_{\tilde{A}_i} = \emptyset$, so by the same argument as above,
$$(\star\star)\ \ \mu(O_{\tilde{A}_i})  \stackrel{\varepsilon'}{\sim} 1 \ \ \text{ for } i=2,\ldots,n-m+1$$
Using $(\star)$ and $(\star\star)$ we conclude that
$$(\star\star\star)\ \ \mu(O_{\tilde{A}_i}\setminus \{\chi_{A_{i-1}},\chi_{A_i}\})  \stackrel{3\varepsilon'}{\sim} -1 \ \ \text{ for } i=2,\ldots,n-m+1$$
All sets appearing in $(\star)$ and $(\star\star\star)$ are pairwise disjoint, and there are $2n-2m+1$ of them, so we conclude that 
$$\|\mu\| > 2n-2m+1 - (2n-2m+1)3\varepsilon'$$
By the choice of $\varepsilon'$ we are done.
\end{proof}

\section{Balls of the Hilbert space}

This section is devoted to the proof of Theorem~\ref{balls}. First, we can get a simpler topologically equivalent description of the balls in Hilbert space. We fix an uncountable set $X$. For $\lambda\in (0,+\infty)$, let $B_\lambda = \{z\in\mathbb{R}^X : \sum_{i\in X}|z_i|\leq\lambda\}$. If $\Delta:\ell_2(X) \To \mathbb{R}^X$ is the function given by $\Delta((z_i)_{i\in X}) = (sign(z_i)\cdot z_i^2)_{i\in X}$, then it is easy to check that $\Delta: \lambda B_{\ell_2(X)} \To B_{\sqrt{\lambda}}$ establishes a homeomorphism for each $\lambda$. Thus, Theorem~\ref{balls} can be equivalently reformulated saying that for each $0<\lambda<\mu<+\infty$, there is no extension operator $T:C(B_\lambda)\To C(B_\mu)$. We can also look at the compact sets $B^+_\lambda = \{z\in B_\lambda : \forall i\in X\ z_i\geq 0\}$. There is a continuous retraction $\rho:B_\lambda\To B_\lambda^+$ given by $\rho((z_i)_{i\in X}) = (|z_i|)_{i\in X}$. If an extension operator $T:C(B_\lambda)\To C(B_\mu)$ exists, then $T^+(f) = T(f\circ\rho)|_{B^+_\mu}$ would give an extension operator $T^+:C(B^+_\lambda)\To C(B^+_\mu)$. Thus, it is enough to prove the following:

\begin{thm}\label{1balls}
For each $0<\lambda<\mu<+\infty$, there is no extension operator $T:C(B^+_\lambda)\To C(B^+_\mu)$.
\end{thm}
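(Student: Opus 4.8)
The plan is to argue by contradiction. Suppose an extension operator $T\colon C(B^+_\lambda)\To C(B^+_\mu)$ exists and put $N=\|T\|$. As in Section~\ref{sectionpre}, associate to $T$ the weak$^*$-continuous map $\phe_T\colon B^+_\mu\To N\cdot B_{M(B^+_\lambda)}$, $\phe_T(z)=T^\ast(\delta_z)$, which satisfies $\phe_T(y)=\delta_y$ for every $y\in B^+_\lambda$. I will exhibit a single point $p\in B^+_\mu$ for which $\|\phe_T(p)\|$ is forced to be arbitrarily large, contradicting $\|\phe_T(p)\|\le N$. Fix a large $n$, set $c=\mu/n$ and $m=\lfloor\lambda n/\mu\rfloor$, so that $mc\le\lambda<(m+1)c$ and $n-m+1\ge n(1-\lambda/\mu)-1\to\infty$. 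For finite $A\subseteq X$ write $p_A=c\,\chi_A\in\mathbb{R}^X$; then $p_A\in B^+_\lambda$ exactly when $|A|\le m$, while $p_Z\in B^+_\mu$ whenever $|Z|\le n$. These lattice points at the two radii play the role that $\sigma_m(2^X)\subset\sigma_n(2^X)$ played in Theorem~\ref{omega1}.

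I would then run the free-set argument from the lower bound of Theorem~\ref{omega1} almost verbatim, replacing the clopen sets $O_B$ by the continuous test functions $g_B\in C(B^+_\lambda)$, $g_B(z)=\prod_{x\in B}\min(z_x/c,1)$, which satisfy $0\le g_B\le1$ and, at lattice points, $g_B(p_C)=1$ if $B\subseteq C$ and $=0$ otherwise. Fix a small $\eps'>0$. For $A\in[X]^{\le m}$ put $U_A=\{\nu\in M(B^+_\lambda):|\langle\nu,g_A\rangle-1|<\eps'\}$; this is a weak$^*$-open neighbourhood of $\delta_{p_A}=\phe_T(p_A)$, so $V_A=\phe_T^{-1}(U_A)$ is open in $B^+_\mu$ and contains $p_A$. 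Since $B^+_\mu$ carries the product topology, $V_A$ contains a basic neighbourhood constraining finitely many coordinates, yielding a finite set $S_A\subseteq X\setminus A$ such that $p_Z\in V_A$ (hence $\phe_T(p_Z)\in U_A$) whenever $A\subseteq Z$ and $Z\cap S_A=\emptyset$, because then the constrained coordinates of $p_Z$ coincide with those of $p_A$. Applying Lemma~\ref{omega_1} to $A\mapsto S_A$ produces $Z=\{z_1,\dots,z_n\}$ such that every window $A_i=\{z_i,\dots,z_{i+m-1}\}$ satisfies $Z\cap S_{A_i}=\emptyset$: coordinates inside $A_i$ avoid $S_{A_i}$ since $S_{A_i}\cap A_i=\emptyset$, and coordinates outside are covered by the conclusion of the lemma. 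Hence $\langle\phe_T(p_Z),g_{A_i}\rangle>1-\eps'$ for all $i=1,\dots,n-m+1$.

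Writing $\mu=\phe_T(p_Z)$ and summing, $\langle\mu,\sum_i g_{A_i}\rangle\ge(n-m+1)(1-\eps')$, while $|\langle\mu,\sum_i g_{A_i}\rangle|\le\|\mu\|\cdot\|\sum_i g_{A_i}\|_\infty\le N\cdot\|\sum_i g_{A_i}\|_\infty$. So everything reduces to a bound on $G(z)=\sum_{i=1}^{n-m+1}g_{A_i}(z)$ that is uniform in $n$. Fixing $z\in B^+_\lambda$ and setting $w_j=\min(z_{z_j}/c,1)\in[0,1]$, one has $G(z)=\sum_{i=1}^{n-m+1}\prod_{j=i}^{i+m-1}w_j$ with $\sum_j w_j\le(\sum_x z_x)/c\le\lambda/c<m+1$. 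This expression is affine in each $w_j$, so it attains its maximum over the polytope $\{w\in[0,1]^n:\sum_j w_j\le m+1\}$ at a vertex, where at most one coordinate is non-integral; a short count, using that a $0/1$ pattern of total weight below $m+1$ contains at most two length-$m$ windows, gives $G(z)\le 3$. Therefore $(n-m+1)(1-\eps')\le 3N$, which is impossible once $n$ is large enough, completing the proof.

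The topological and free-set machinery of the second paragraph is routine, being a transcription of the lower-bound half of Theorem~\ref{omega1} with $g_B$ in place of $O_B$. The genuinely essential step, and the one I expect to be the main obstacle, is the combinatorial estimate $G(z)\le 3$: this is precisely where the defining constraint $\sum_x z_x\le\lambda$ of the target ball is converted into a bound independent of $n$. It works only because the choice $m=\lfloor\lambda n/\mu\rfloor$ forces the total weight $\lambda/c$ below $m+1$, i.e.\ below the length of a single window; a smaller $m$ would destroy the product structure that keeps $G$ bounded (for instance $m=1$ only yields $G\le\lambda/c\approx(\lambda/\mu)n$ and hence a useless constant lower bound on $\|\mu\|$). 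Thus the interplay between the window length $m$ and the ball radius $\lambda$ is the heart of the matter.
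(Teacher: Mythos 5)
Your overall strategy is genuinely different from the paper's: the paper first builds a regular (norm-one) extension operator $R:C(S_m)\To C(B^+_1)$ (Lemma~\ref{reg}), uses it to factor a hypothetical $T$ into an extension operator for the inclusion $S_m\subset\frac{m+k}{m}S_{m+k}$, i.e.\ for $\sigma_m(2^X)\subset\sigma_{m+k}(2^X)$, and then quotes the already computed value $\eta(\sigma_m(2^{\aleph_1}),\sigma_{m+k}(2^{\aleph_1}))=2k+1$ from Theorem~\ref{omega1}. You instead rerun the free-set argument directly on the ball with continuous test functions $g_B$ replacing the clopen sets $O_B$; that part of your argument (the choice of $p_A$, the sets $S_A$, the application of Lemma~\ref{omega_1} to the windows $A_i$) is correct and faithfully parallels the lower-bound half of Theorem~\ref{omega1}. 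The price you pay for bypassing Lemma~\ref{reg} is that you must bound $\bigl\|\sum_i g_{A_i}\bigr\|_\infty$ on $B^+_\lambda$ uniformly in $n$, and this is exactly where your proof has a genuine gap.

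The step ``$G$ is affine in each $w_j$, so it attains its maximum over the polytope $\{w\in[0,1]^n:\sum_j w_j\le m+1\}$ at a vertex'' is false as a general principle: coordinatewise affineness forces the maximum to a vertex only over a \emph{box}, not over a box cut by a budget hyperplane. For instance $f(w_1,w_2)=w_1w_2$ on $[0,1]^2\cap\{w_1+w_2\le 1\}$ attains its maximum $1/4$ at $(1/2,1/2)$, while $f$ vanishes at all three vertices. The standard one-coordinate-at-a-time push to an endpoint breaks down precisely when two non-integral coordinates occur in a common monomial (then moving along $e_j-e_k$ on the tight budget face makes $G$ a \emph{concave} quadratic, which can peak in the interior), and this is exactly the situation for your $G$, since adjacent windows overlap in $m-1$ coordinates. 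So the inequality $G(z)\le 3$ --- which you correctly identify as the heart of your argument --- is not established. I believe the inequality is in fact true (small cases and heuristics suggest $\sup G=2$, attained at $m+1$ consecutive coordinates equal to $1$, since $k$ consecutive full windows cost $m+k-1$ units of budget and the budget is $m+1$), but it requires a real proof that handles configurations with several fractional coordinates inside a single window; a purely ``vertex'' count does not suffice. Until that estimate is proved, the contradiction $(n-m+1)(1-\eps')\le 3N$ is not available and the argument does not close. If you want to avoid this combinatorial issue altogether, the paper's detour through the regular operator $R$ of Lemma~\ref{reg} is exactly the device that replaces it.
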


For $n<\omega$, Let $S_n$ be the set of all elements of $B^+_1$ whose coordinates take the only values 0 or $1/n$. The set $S_n$ is homeomorphic to $\sigma_n(2^X)$. It is a standard terminology to call an extension operator $T$ to be \emph{regular} if it $T$ is positive, $\|T\|=1$, and it preserves constant functions. The only fact among these that is relevant to us is the value of the norm.

\begin{lem}\label{reg}
For each $m<\omega$, there exists a regular extension operator  $$R:C(S_m)\To C(B^+_1).$$
\end{lem}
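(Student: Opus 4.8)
```latex
The goal is to construct a regular (positive, norm-one, constant-preserving) extension operator $R:C(S_m)\To C(B^+_1)$, where $S_m\subset B^+_1$ is the copy of $\sigma_m(2^X)$ consisting of points whose coordinates lie in $\{0,1/m\}$. The plan is to exploit the fact that the target $B^+_1$ is a convex (weakly) compact set, and that $S_m$ sits inside it as the set of ``scaled characteristic functions of $m$-element sets''. Since a regular extension operator into $C(B^+_1)$ is dual to a weak-continuous map $\phe:B^+_1\To P(S_m)$ assigning to each point $z\in B^+_1$ a Radon probability measure $\phe(z)$ on $S_m$ in such a way that $\phe(s)=\delta_s$ for $s\in S_m$, the whole problem reduces to producing such a map $\phe$ that is continuous for the weak topologies. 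The operator is then recovered by $R(f)(z)=\int_{S_m} f\,d\phe(z)$; positivity, norm one and preservation of constants are automatic from $\phe(z)$ being a probability measure.

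First I would describe the candidate $\phe$ concretely. Given $z=(z_i)_{i\in X}\in B^+_1$ with $\sum_i z_i\leq 1$ and all $z_i\geq 0$, I want to write $\delta_z$-data as an average of Dirac masses at points of $S_m$, i.e.\ at scaled characteristic functions $\tfrac1m\chi_A$ with $|A|\leq m$. The natural device is the classical correspondence between a vector of nonnegative weights summing to at most $1$ and a probability distribution on $m$-element multisets (or subsets), in the spirit of a ``Poisson/multinomial'' or a symmetric-function construction: one distributes the mass $z$ among coordinates and reads off which $m$-subset is selected. Concretely, I would look for coefficients, depending polynomially (or via elementary symmetric functions) on the $z_i$, so that $\phe(z)=\sum_{A\in[X]^{\leq m}}c_A(z)\,\delta_{(1/m)\chi_A}$ with all $c_A(z)\geq 0$ and $\sum_A c_A(z)=1$, and so that $\phe((1/m)\chi_B)=\delta_{(1/m)\chi_B}$ for $|B|\leq m$. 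The key structural requirement is that each coordinate functional $z\mapsto z_i$ be recoverable as a linear average under $\phe$, which pins down the first moments and forces the normalization; the remaining freedom is used to keep all coefficients nonnegative.

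The crux of the argument is checking that this $\phe$ is weak-continuous as a map into $M(S_m)$ with the weak${}^*$ topology, equivalently that $z\mapsto R(f)(z)=\int f\,d\phe(z)$ is weakly continuous on $B^+_1$ for every $f\in C(S_m)$. Since $B^+_1$ is an Eberlein (hence Fr\'echet--Urysohn) compact, it suffices to verify sequential continuity: if $z^{(k)}\to z$ weakly in $B^+_1$, then $z^{(k)}_i\to z_i$ coordinatewise, and I must show $\int f\,d\phe(z^{(k)})\to\int f\,d\phe(z)$. The danger is that weak convergence gives only coordinatewise convergence, not norm convergence, so mass can ``escape to infinity'' along infinitely many coordinates; this is exactly the phenomenon that kills naive retractions in the Corson--Lindenstrauss setting. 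I expect this to be the main obstacle. The way around it is that the coefficients $c_A(z)$ depend only on finitely many coordinates at a time in a controlled, symmetric way, so that for a fixed continuous $f$ (which by density may be taken to depend on finitely many coordinates up to small error) the integral $\int f\,d\phe(z)$ is a uniformly continuous function of the relevant finite blocks of coordinates, with tails controlled by $\sum_i z_i\leq 1$.

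Assembling these pieces, I would first exhibit the explicit nonnegative coefficients $c_A(z)$ and verify the two algebraic conditions (probability normalization $\sum_A c_A(z)=1$ and the interpolation $\phe((1/m)\chi_B)=\delta_{(1/m)\chi_B}$), which are finite combinatorial identities; then prove sequential weak-continuity of $R(f)$ for $f$ depending on finitely many coordinates, using the tail bound to handle the remaining coordinates; and finally pass to general $f\in C(S_m)$ by a Stone--Weierstrass density argument. Once $R(f)$ is shown to be weakly continuous on $B^+_1$ for all $f$, the operator $R:C(S_m)\To C(B^+_1)$ is well defined, and positivity, $\|R\|=1$ and preservation of constants follow immediately from the fact that each $\phe(z)$ is a probability measure supported on $S_m$. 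This yields the desired regular extension operator.
```
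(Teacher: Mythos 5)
Your high-level framing is consistent with the paper's: view the operator through its dual map $z\mapsto\phe(z)$ into probability measures on $S_m$, get regularity for free from the probabilistic normalization, prove continuity first for functions depending on finitely many coordinates, and finish by Stone--Weierstrass. But the actual content of the lemma --- the explicit choice of the coefficients $c_A(z)$ --- is never produced, and the construction you gesture at would not work. A ``multinomial/symmetric-function'' recipe that distributes the mass of $z$ over all coordinates makes $c_A(z)$ depend on infinitely many coordinates through expressions like $\sum_i z_i$, and such expressions are \emph{not} weakly continuous on $B^+_1$: the points $n^{-1}\chi_{A_n}$ with $|A_n|=n$ pairwise disjoint converge weakly to $0$ while $\sum_i z_i\equiv 1$. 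This is precisely the ``mass escaping to infinity'' danger you name, but your proposed remedy (``coefficients depend only on finitely many coordinates at a time in a controlled, symmetric way'') is exactly the missing idea, not a resolution of it. The paper's device is a hard threshold: fix $\varepsilon$ with $1/(m+1)<\varepsilon<1/m$, let $F_z=\{i: z_i>\varepsilon\}$ (automatically $|F_z|\leq m$ because $\sum_i z_i\leq 1$), and build $\phe(z)$ as a partition-of-unity combination $\sum_{A\subset F_z}\bigl(\prod_{i\in F_z}g_{\chi_A(i)}(z_i)\bigr)\delta_{m^{-1}\chi_A}$ using bump functions $g_0=1-g_1$ that vanish outside $[\varepsilon,1/m]$. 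Only the finitely many coordinates exceeding $\varepsilon$ ever enter, and the cutoff smooths over the discontinuity of $z\mapsto F_z$; this localization is the whole point of the lemma and is absent from your sketch.

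A second, smaller problem: you call it ``the key structural requirement'' that each coordinate functional $z\mapsto z_i$ be recoverable as the first moment of $\phe(z)$, i.e.\ that $\phe(z)$ have barycenter $z$. That is not required for an extension operator (only $\phe(s)=\delta_s$ for $s\in S_m$ is needed), and it is in fact impossible: every point of $S_m$ has all coordinates in $\{0,1/m\}$, so any barycenter of a probability measure on $S_m$ has all coordinates at most $1/m$, whereas $B^+_1$ contains points with a coordinate equal to $1$. Insisting on this condition would make the construction unachievable, so this constraint must be dropped rather than used to ``pin down'' the coefficients.
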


\begin{proof}
Fix $\epse$ such that $1/(m+1)<\epse<1/m$. Choose a continuous nondecreasing function $g_1:\mathbb{R}\To [0,1]$ such that $g_1(t) = 0$ if $t\leq \epse$ and $g(t) = 1$ if $t\geq 1/m$. Let also $g_0 = 1 - g_1$. For $z=(z_i)_{i\in X}\in B_1$, define the set $$F_z = \{i\in X : z_i>\epse\}\in [X]^{\leq m}.$$ The operator $R$ is given by the following formula:

$$Rf(z) = \sum_{A\subset F_z} \left(\prod_{i\in F_z} g_{\chi_A(i)}(z_i)\right)\cdot f(m^{-1}\chi_A)$$
(here, we use the convention that $\prod_{i\in F_z} g_{\chi_A(i)}(z_i)=1$, if $F_z=\emptyset$).

First, we check that $Rf$ is an extension of $f$. Namely, if $z= m^{-1}\chi_A \in S_m$, then $F_z = A$ and  $g_{\chi_A}(i)(z_i)$ equals 0 when $i\not\in A$ and equals 1 when $i\in A$. So $Rf(z) = f(z)$. Second, for any $z\in B^+_1$, since $g_0(z_i) + g_1(z_i) = 1$, it is easy to check  that 
\begin{eqnarray*}
\sum_{A\subset F_z} \left(\prod_{i\in F_z} g_{\chi_A(i)}(z_i)\right) &=& 1\\
\end{eqnarray*}
From this equality, it easily follows that $\|R\|=1$, and, by the way, that $R$ preserves constant functions.

It remains to verify that $Rf$ is indeed continuous whenever $f$ is continuous. First, we shall check this when $f$ depends on finitely many coordinates, i.e., there is a finite set $H\subset X$ such that $f(m^{-1}\chi_A)= f(m^{-1}\chi_{A\cap H})$, for any $A\in [X]^{\leq m}$. Then, using the equality $g_0(z_i) + g_1(z_i) = 1$, one can verify that

$$Rf(z) = \sum_{A\subset F_z\cap H} \left(\prod_{i\in F_z\cap H} g_{\chi_A(i)}(z_i)\right)\cdot f(m^{-1}\chi_{A}).$$

This formula shows that $Rf$ is continuous.

Now, observe that by the Stone-Weierstrass Theorem, the family $\mathcal{F}$ of functions depending on finitely many coordinates is dense in $C(S_m)$, therefore any function $f\in C(S_m)$ is a uniform limit of a sequence $(f_n)$ of functions $f_n\in \mathcal{F}$. Since $\|R\|=1$, $Rf$ is a uniform limit of a sequence $(Rf_n)$, which demonstrates the continuity of $Rf$.
\end{proof}

We proceed to the proof of Theorem~\ref{1balls}. It is enough to prove the case when $\lambda=1$, and later apply the homeomorphism $(z_i)_{i\in X}\mapsto (\lambda^{-1}z_i)_{i\in X}$. So suppose that there exists such an operator $T:C(B^+_1)\To C(B^+_\mu)$. Take two natural numbers $m,k$ such that $m>k>\|T\|$ and $1 + k/m < \mu$. Then, we have a diagram of inclusions

$$\begin{CD}
 B^+_1 @>>> B^+_\mu \\
 @AAA @AAA \\
 S_n @>>> \frac{m+k}{m}S_{m+k} .
\end{CD}$$

By Lemma~\ref{reg}, the left vertical arrow has a regular extension operator $R:C(S_m)\To C(B^+_1)$. Therefore, we have an extension operator for the lower arrow, namely $E = rTR$, where $r$ is the restriction operator. Notice that $\|E\| \leq \|T\|$. But notice that the lower arrow is just the same as the inclusion $\sigma_m(2^X) \To \sigma_{m+k}(2^X)$, hence by Theorem~\ref{omega1} and Proposition~\ref{smallerset}, we get $\|T\| \geq \|E\| \geq 2k+1 >\|T\|$, a contradiction.

\end{document}